\newtheorem{theorem}{Theorem}[section]
\newtheorem{proposition}[theorem]{Proposition}
\newtheorem{lemma}[theorem]{Lemma}
\theoremstyle{definition}
\theoremstyle{remark}
\numberwithin{equation}{section}
\newcommand{\al}{\alpha}
\newcommand{\be}{\beta}
\newcommand{\de}{\delta}
\newcommand{\ep}{\epsilon}
\newcommand{\la}{\lambda}
\newcommand{\om}{\omega}
\newcommand{\si}{\sigma}
\newcommand{\De}{\Delta}
\newcommand{\La}{\Lambda}
\def\hg{\widehat g}
\newcommand{\tu}{\widetilde{u}}
\newcommand{\tbe}{\widetilde{\be}}
\newcommand{\tY}{\widetilde{Y}}
\def\RR{\mathbb{R}}
\def\BB{\mathbb{B}}
\def\ZZ{\mathbb{Z}}
\def\TT{\mathbb{T}}
\def\MM{{\mathbb{M}^3}}
\renewcommand\SS{\mathbb{S}}
\newcommand\Bu{{\bar u}}
\newcommand{\cH}{{\mathcal H}}
\newcommand{\cS}{{\mathcal S}}
\newcommand{\pd}{\partial}
\newcommand\minus\backslash
\newcommand\lan\langle
\newcommand\ran\rangle
\newcommand{\e}{{e}}
\DeclareMathOperator\Div{div}
\DeclareMathOperator\dist{dist}
\renewcommand\leq\leqslant
\renewcommand\geq\geqslant
\newlength{\intwidth}
 \DeclareMathOperator\curl{curl}
\begin{document}

\title[Knotted
  structures in Beltrami fields on the torus and the sphere]{Knotted
  structures in high-energy Beltrami fields on the torus and the sphere}

\author{Alberto Enciso}
\address{Instituto de Ciencias Matem\'aticas, Consejo Superior de
  Investigaciones Cient\'\i ficas, 28049 Madrid, Spain}
\email{aenciso@icmat.es, dperalta@icmat.es, fj.torres@icmat.es}

\author{Daniel Peralta-Salas}
%\address{Instituto de Ciencias Matem\'aticas, Consejo Superior de
%  Investigaciones Cient\'\i ficas, 28049 Madrid, Spain}
%\email{dperalta@icmat.es}

\author{Francisco Torres de Lizaur}
%\address{Instituto de Ciencias Matem\'aticas, Consejo Superior de
%  Investigaciones Cient\'\i ficas, 28049 Madrid, Spain}
%\email{fj.torres@icmat.es}

%%    General info
%\subjclass[2010]{35B38, 58J05, 58K45}
%\date{\today}
%
%\keywords{ }
%
\begin{abstract}
Let $\cS$ be a finite union of (pairwise disjoint but possibly knotted and linked)
closed curves and tubes in the round sphere $\SS^3$ or in the flat
torus $\TT^3$. In the case of the torus, $\cS$ is further assumed to
be contained in a contractible subset of $\TT^3$. In this paper we show that for any
sufficiently large odd integer $\la$ there exists a Beltrami field on
$\SS^3$ or $\TT^3$ satisfying $\curl u=\la u$ and with a collection of
vortex lines and vortex tubes given by~$\cS$, up to an ambient diffeomorphism.
\end{abstract}
\maketitle

\section{Introduction}

An incompressible fluid flow in $\RR^3$ is described
by its velocity field $u(x,t)$, which is a time-dependent vector field satisfying the Euler equations
\[
\pd_t u+ (u\cdot\nabla)u=-\nabla P\,,\qquad \Div u=0
\]
for some pressure function $P(x,t)$. When the velocity field does not
depend on time, the fluid is said to be {\em stationary}\/. This paper
concerns stationary solutions of the Euler equations, which describe equilibrium
configurations of the fluid.

A central topic in topological fluid mechanics, which can be traced
back to Lord Kelvin in the XIX~century~\cite{Kelvin}, concerns the existence
of knotted stream and vortex structures in stationary fluid
flows. The most relevant of these structures are the stream lines, vortex lines and
vortex tubes of the fluid. We recall that a {\em stream line} and a
{\em vortex line}\/ are simply a trajectory (or integral curve) of the
velocity field $u$ and the vorticity $\om:=\curl u$, respectively,
while a {\em vortex tube}\/ is the interior domain bounded by an
invariant torus of the vorticity. The existence of topologically
complicated stream and vortex lines is a central topic in the
Lagrangian theory of turbulence and in magnetohydrodynamics, and has
been studied extensively in the last decades (see e.g.~\cite{Kh05,Mo14} for recent accounts of the subject).

Our understanding of the set of stationary states of the Euler
equations in three dimensions is much more limited than in the two-dimensional situation~\cite{CS12,Nad13}. In particular, the existence of stationary solutions in $\RR^3$ having stream lines, vortex lines and
vortex tubes that are knotted and linked in 
arbitrarily complicated ways has been established only very recently~\cite{Annals,Acta,EMS}. Following a suggestion of
Arnold~\cite{Ar65,AK} related to his celebrated structure theorem, to
prove these results one does not consider just any kind of solutions to
the stationary Euler equations but a very particular class that are
called Beltrami fields. A {\em Beltrami field}\/ in~$\RR^3$ is a vector field
satisfying the equation
\begin{equation}\label{Beltrami}
\curl u=\la u
\end{equation}
for some nonzero constant~$\la$. Notice that stream lines and vortex lines coincide in the
case of a Beltrami field, and that a Beltrami field is automatically
smooth (even real analytic) by the elliptic regularity theory.

The stationary solutions in~$\RR^3$ that one can construct using the techniques
in~\cite{Annals,Acta} fall off at infinity as $1/|x|$, this decay
being sharp for Beltrami fields but not fast enough for the
velocity to be in the energy space $L^2(\RR^3)$. In fact, the
incompressibility condition ensures that there are no Beltrami fields
in $\RR^3$ with finite energy even if the proportionality factor $\la$
is allowed to be nonconstant, as has been recently shown in~\cite{Nad14,CC}. 

On the contrary, Beltrami fields in a closed Riemannian 3-manifold $M$ (or a bounded domain of $\RR^3$) are
stationary solutions to the Euler equations that do have finite
energy. If $\cS$ is a union of
(possibly knotted and linked) closed curves and embedded tori in the
3-sphere, in this setting one can use contact topology to show~\cite{EG00} that
there is a Riemannian metric~$g$ on the sphere
with an associated Beltrami field $u$ having a collection of vortex
lines and vortex tubes given precisely by~$\cS$. The main ideas of the proof
are that the Reeb field of a contact form is in fact a Beltrami
field in some adapted metric and that one can indeed construct contact
forms on the sphere whose Reeb fields have the collection
of periodic trajectories and invariant tori given by~$\cS$. Notice
that, as it is a Reeb vector field, a Beltrami field obtained in this fashion
does not vanish. Conversely, any nonvanishing Beltrami field on the
sphere is the Reeb vector field of some contact form, so in particular it must possess a closed vortex line~\cite{Ho93}.

Our goal in this paper is to establish the existence of knotted and
linked vortex structures in Beltrami fields on compact manifolds with
a {\em fixed}\/ Riemannian metric. Specifically, we will consider
Beltrami fields in the flat 3-torus $\TT^3$ and in the unit 3-sphere
$\SS^3$; in fact, the former is the most fundamental space considered
in the fluid mechanics literature other than~$\RR^3$ and the latter is
perhaps the simplest example of a closed Riemannian
3-manifold from a geometric point of view.

It is worth emphasizing that, for a fixed Riemannian structure, the
problem is much more rigid than when one can freely choose a metric
adapted to the geometry of the set of lines and tubes that one aims to
recover from the trajectories of a Beltrami field. An obvious reason
is that, analytically, Beltrami fields in a closed Riemannian manifold
arise as eigenfields of the curl operator, which defines a
self-adjoint operator with discrete spectrum and a dense domain in the space of
divergence-free $L^2$~fields. In the context of spectral theory, the
proportionality constant $\la$, or rather its absolute value, can be
thought of as the {\em energy}\/ of the Beltrami field, although of
course it is in no way related to the $L^2$~norm of the latter.

Our main theorem asserts that there are ``many'' Beltrami fields~$u$ in the sphere
and in the torus with vortex lines and vortex tubes of any link
type. Furthermore, these structures are {\em structurally stable}\/ in
the sense that any vector field on the torus or the sphere which is
sufficiently close to~$u$ in the $C^4$~norm and which
preserves some smooth volume measure will also have this collection of
periodic trajectories and invariant tori, up to a diffeomorphism. 
To state this result precisely, let us call a {\em tube}\/
the closure of a domain (in $\SS^3$ or $\TT^3$) whose boundary is an embedded
torus. Throughout, diffeomorphisms are of class
$C^\infty$, curves are all assumed to be non-self-intersecting, and we
will agree to say that an integer is large when it is large in absolute value.

\begin{theorem}\label{T.main}
Let $\cS$ be a finite union of (pairwise disjoint, but possibly knotted and linked) closed curves and tubes in
$\SS^3$ or $\TT^3$. In the case of the torus, we also assume that $\cS$
is contained in a contractible subset of~$\TT^3$. Then for any large
enough odd integer $\la$ there exists
a Beltrami field~$u$ satisfying the equation $\curl u=\la u$ and a diffeomorphism $\Phi$ of
$\SS^3$ or $\TT^3$ such that $\Phi(\cS)$ is a union of vortex lines
and vortex tubes of~$u$. Furthermore, this set is structurally stable.
\end{theorem}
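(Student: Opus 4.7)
The first move is to produce a Beltrami field on a small ball that already realizes $\cS$ topologically. From the $\RR^3$ constructions of~\cite{Annals,Acta,EMS}, one obtains a Beltrami field $v$ with $\curl v=v$ whose set of vortex lines and vortex tubes contains a collection ambient-diffeomorphic to $\cS$; moreover the closed vortex lines can be taken to be hyperbolic periodic orbits and the boundaries of the vortex tubes to be Diophantine invariant tori on which the flow is conjugate to a rotation. The rescaling $v_\la(x):=v(\la x)$ then satisfies $\curl v_\la=\la v_\la$, and for $\la$ large its diffeomorphic copy of $\cS$ sits inside a Euclidean ball $B$ of radius $\sim 1/\la$. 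In~$\TT^3$ such a small ball embeds isometrically inside the prescribed contractible subset; in~$\SS^3$ it can be identified with a small geodesic ball via an exponential chart, with a metric error of order $r^2$ that we absorb at the end of the argument into the approximation error from Step~2.

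\textbf{Step 2: Runge-type approximation at a fixed eigenvalue.} The heart of the argument is a density statement for high-energy eigenfields of curl: given any Beltrami field $w$ with $\curl w=\la w$ on a ball $B$ of radius $\sim 1/\la$ and any $\ep>0$, I would show that for every sufficiently large odd integer~$\la$ there is a global eigenfield $u$ on the ambient manifold satisfying $\curl u=\la u$ with $\|u-w\|_{C^4(B)}<\ep$. On $\TT^3$ this reduces to approximation by linear combinations of the Beltrami plane-wave modes $c_k\, e^{\iu k\cdot x}$ with $k\in\ZZ^3$, $|k|=\la$, $k\cdot c_k=0$, and $\iu k\times c_k=\la c_k$. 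Gauss's three-squares theorem ensures that $\la^2$ is a sum of three squares for every odd $\la$, and classical equidistribution results on lattice points on spheres imply that $\{k/\la:|k|=\la\}$ equidistributes on $\SS^2$ as $\la\to\infty$ through odd integers. A duality argument, combined with the representation of local Beltrami fields as continuous superpositions of plane-wave Beltrami modes (via a Fourier-Bessel expansion on $B$), then yields the claimed local density. On $\SS^3$ one runs the analogous argument using vector spherical harmonics and the asymptotic expansion of the corresponding special functions at frequency $\la$.

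\textbf{Step 3: Stability transfer and conclusion.} Once the approximation is in place, I would choose $\ep$ smaller than both the hyperbolicity exponents of the closed orbits and the smallness thresholds required by the KAM theorem for the invariant tori of $v_\la$. Hyperbolic persistence then produces, via a $C^4$-close diffeomorphism $\Phi$ supported near $B$, the prescribed closed vortex lines of~$u$, and the KAM theorem in its volume-preserving 3D form produces the invariant tori bounding the prescribed vortex tubes. Exactly the same hyperbolic and KAM arguments imply that the whole collection is preserved by any $C^4$-small volume-preserving perturbation of $u$, giving the claimed structural stability.

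The main obstacle is Step~2. Unlike in classical Runge approximation for the Laplacian, one cannot combine eigenspaces at different eigenvalues, so the entire approximation has to be carried out inside the single (only polynomially large) curl-eigenspace at eigenvalue~$\la$. The odd-integer assumption is used precisely here: it guarantees, through the three-squares theorem on the torus and an analogous parity consideration on the sphere, that this eigenspace is nontrivial for all large~$\la$ and that the directions of the available modes become sufficiently equidistributed to drive the density statement as $\la\to\infty$.
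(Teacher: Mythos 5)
Your proposal follows essentially the same route as the paper: a rescaled local Beltrami model taken from~\cite{Acta}, an approximation theorem carried out entirely within the single curl-eigenspace at eigenvalue~$\la$ (discretizing the Herglotz plane-wave representation of the local model and exploiting the equidistribution of the admissible directions on~$\SS^2$ for the torus, and high-frequency asymptotics of spherical harmonics combined with the Hopf fields for the sphere), followed by a transfer of structural stability via persistence of nondegenerate periodic orbits and the volume-preserving KAM theorem. The only deviations are cosmetic: the closed vortex lines of the model field in~\cite{Acta} are elliptic rather than hyperbolic (either kind persists, being nondegenerate, under $C^1$-small perturbations), and the oddness of~$\la$ is needed only on the torus, where it guarantees the Duke-type equidistribution of the directions $k/\la$ with $|k|=\la$; no parity condition is required on the sphere.
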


An important observation is that the proof of this theorem yields a
reasonably complete understanding of the behavior of the
diffeomorphism~$\Phi$, which is, in particular, connected with the identity. Oversimplifying a little, the
effect of $\Phi$ is to uniformly rescale a contractible subset of the
manifold that contains~$\cS$ to have a diameter of order
$1/|\la|$. In particular, the control that we have over the
diffeomorphism~$\Phi$ allows us to prove an analog of this result for
quotients of the sphere by finite groups of isometries (lens spaces). Notice that $\Phi(\cS)$ is not guaranteed to contain all vortex
lines and vortex tubes of the Beltrami field. It is also worth
mentioning that, if $\cS$ only consists of curves, the condition that
the perturbation of the Beltrami field be volume-preserving is not
necessary for the structural stability of~$\Phi(\cS)$, and the smallness
in~$C^4$ can be replaced by a $C^1$~condition.

In $\SS^3$ and $\TT^3$, Theorem~\ref{T.main} proves a conjecture of
Arnold~\cite{Ar65} asserting that there should be Beltrami fields
having stream lines with complicated topology. Furthermore, it should
be noticed that the helicity $\cH(\curl u)$ of the vorticity is
proportional to its eigenvalue $\la$ so the Beltrami fields
constructed in the main theorem have very large helicity. More
precisely, the scale-invariant quantity $\cH(\curl u)/\|u\|_{L^2}^2$, which
is given by~$\la$ in the case of a Beltrami field, becomes arbitrarily
large. This is fully consistent with Moffatt's
interpretation~\cite{Mo14} of helicity as a measure of the degree of
knottedness of the vortex lines in the fluid flow.

The proof of the theorem involves an interplay between rigid and
flexible properties of high-energy Beltrami fields. Indeed, rigidity
appears because high-energy Beltrami fields in any 3-manifold behave,
locally in sets of diameter $1/\la$, as Beltrami fields in $\RR^3$
with parameter $\la=1$ do in balls of diameter~1. The catch here is
that, in general, one cannot check whether a given Beltrami field in
$\RR^3$ actually corresponds to a high-energy Beltrami field on the
compact manifold. To prove a partial converse implication in this
direction (Theorem~\ref{T.approx}), it is key to exploit some flexibility that arises in the
problem as a consequence of the fact that large eigenvalues of the
curl operator in the torus or in the sphere have increasingly high
multiplicities. For this reason the proof does not work in a general
Riemannian 3-manifold.

One should notice that the techniques introduced in~\cite{Annals,Acta}
to prove the existence of Beltrami fields in $\RR^3$ with a prescribed
set $\cS$ of closed vortex lines and vortex tubes do not work for
compact manifolds. The reason is that the proof is based on the
construction of a local Beltrami field in a neighborhood of $\cS$,
which is then approximated by a global Beltrami field in $\RR^3$ using
a Runge-type global approximation theorem. For compact manifolds the
complement of the set $\cS$ is precompact, so we cannot apply the
global approximation theorem obtained in~\cite{Annals,Acta}. In fact,
as is well known, this is not just a technical issue, but a
fundamental obstruction in any approximation theorem of this
sort. This invalidates the whole strategy followed
in~\cite{Annals,Acta} and makes it apparent that new tools are needed
to prove the existence of Beltrami fields with geometrically complex
vortex lines and vortex tubes in compact manifolds.

The paper is organized as follows. In Section~\ref{S.main} we will
prove the main theorem assuming that Theorem~\ref{T.approx}
holds. Theorem~\ref{T.approx} will be proved in Section~\ref{S.sphere}
in the case of the sphere, with the proof of some technical results
relegated to Sections~\ref{S.Prop1}--\ref{S.Prop3}, and in
Section~\ref{S.torus} in the case of the torus. The paper concludes
with some remarks that we present in Section~\ref{S:final}, where in
particular we prove an analog of the main theorem for lens spaces.

\section{Proof of the main theorem}
\label{S.main}
For the ease of notation, we shall write $\MM$ to denote
either~$\TT^3$ (the standard flat 3-torus, $(\RR/2\pi\ZZ)^3$)
or~$\SS^3$ (the unit sphere in $\RR^4$). A Beltrami field $u$ in $\MM$ is an eigenfield of the curl operator, which satisfies
$$
\curl u=\la u\,,
$$
for some nonzero constant $\la$. It is well-known that the spectrum of
$\curl$ in the sphere are the integers of absolute value greater than
or equal to~2, while in $\TT^3$ consists of the real
numbers of the form
$$
\la=\pm|k|
$$
for some $k\in\ZZ^3$. In particular, the spectrum of curl in $\TT^3$
contains the set of integers. Here and in what follows, $|\cdot|$
denotes the usual Euclidean norm of a vector.

The following theorem, whose proof is presented in
Section~\ref{S.sphere}, shows that a Beltrami field $v$ in $\RR^3$ can
be approximated, up to a suitable rescaling, by a
high-energy Beltrami field $u$ in $\MM$. This fact is key to the proof
of Theorem~\ref{T.main} as it implies that the dynamics of any
Beltrami field of~$\RR^3$ in
compact sets can be reproduced in a small ball of $\MM$ by a
high-energy Beltrami field on the manifold, provided that the dynamical properties under
consideration are robust under suitably small perturbations. For
concreteness, we will henceforth assume that~$\la$ is positive; the
case of negative~$\la$ is completely analogous.

For the precise statement of the theorem, let us fix an arbitrary
point $p_0\in\MM$ and take a patch of normal geodesic coordinates
$\Psi:\BB\to B$ centered at $p_0$. Here and in what follows, $B_\rho$ (resp.~$\BB_\rho$) denotes the
ball in~$\RR^3$ (resp.\ the geodesic ball
in~$\MM$) centered at the origin (resp.\ at $p_0$) and of 
radius~$\rho$, and we shall drop the subscript when $\rho=1$. The
theorem will be then stated in terms of the vector field $\Psi_*u$ on
$B$, which is just the expression of the Beltrami field~$u$ in local
normal coordinates. If $u^k(x)$ are the three components of $\Psi_*u$ in the
Cartesian basis $\{e_i\}_{i=1}^3$  of~$\RR^3$, i.e.,
\[
\Psi_*u(x) =\sum_{i=1}^3 u^i(x)\, e_i\,,
\]
we will make use of the rescaled vector field
$$
\Psi_*u\Big(\frac{\cdot}{\la}\Big):=\sum_{i=1}^3
u^i\Big(\frac{\cdot}{\la}\Big)\, e_i\,.
$$

\begin{theorem}\label{T.approx}
Let $v$ be a Beltrami field in $\RR^3$, satisfying
$\curl v=v$. Let us fix
any positive numbers $\ep$ and $m$. Then for any large enough odd integer $\la$ there is a
Beltrami field~$u$, satisfying $\curl u=\la u$ in $\MM$, such that 
\begin{equation}\label{nose}
\bigg\|\Psi_* u\Big(\frac{\cdot}{ \la}\Big)-v\bigg\|_{C^m(B)}<\ep\,.
\end{equation}
\end{theorem}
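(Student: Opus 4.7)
The plan is to reduce the theorem to approximating $v$ by a finite linear combination of plane-wave Beltrami fields $\sum_j a_j e^{i\xi_j\cdot x}$ with $\xi_j\in\SS^2$ and $i\xi_j\times a_j=a_j$, and then to realize each such building block as the rescaled local expression of a genuine curl eigenfield on~$\MM$. First I would establish a Herglotz-type density result: any Beltrami field $v$ with $\curl v=v$ in $\RR^3$ can be approximated in $C^m(B)$, to within $\ep/2$, by a finite sum
\[
v_N(x)=\sum_{j=1}^N a_j\, e^{i\xi_j\cdot x}.
\]
This step is essentially standard because $v$ satisfies $\Div v=0$ and $\Delta v+v=0$, so on the unit ball it expands in divergence-free vector spherical harmonics with spherical Bessel radial profiles; each such mode is itself a continuous superposition of plane-wave Beltrami fields, and a truncation-plus-quadrature argument yields such a finite sum.

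For the torus $\TT^3$ the eigenspace of $\curl$ with eigenvalue $\la$ is spanned by the modes $e^{ik\cdot x}b$ with $k\in\ZZ^3$, $|k|=\la$, and $ik\times b=\la b$, so the transplantation is a Diophantine approximation problem: for each target direction $\xi_j$ one needs a lattice point $k_j$ with $|k_j|=\la$ and $|k_j/\la-\xi_j|$ small. Since $\la$ is a large odd integer, $\la^2\equiv 1\pmod 8$ is never of the form $4^a(8b+7)$, so $\la^2$ is a sum of three squares in very many ways, and by classical equidistribution results for ternary representations (Linnik, Duke, Iwaniec) the points $k/\la$ become dense on $\SS^2$ with a quantitative error. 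After a small adjustment of each $a_j$ so that $ik_j\times a_j'=\la a_j'$ exactly, the field $u(x)=\sum_j a_j'\,e^{ik_j\cdot x}$ satisfies $\curl u=\la u$; in the (Cartesian) normal coordinates on~$\TT^3$, its rescaled version $u(y/\la)=\sum_j a_j'\,e^{i(k_j/\la)\cdot y}$ converges in $C^m(B)$ to~$v_N$, thereby giving $v$ within $\ep$.

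The main obstacle will be the sphere~$\SS^3$, where the geometry is genuinely curved and there is no direct lattice description of the eigenspaces. The strategy there is to exploit the representation theory of $\SO(4)\cong(\SU(2)\times\SU(2))/\{\pm 1\}$ to obtain an ample supply of curl eigenfields with eigenvalue $\la$, and combine this with a WKB/semiclassical analysis: under the rescaling $y=\la x$ in normal geodesic coordinates at $p_0$, the equation $\curl u=\la u$ turns, to leading order in $1/\la$, into the flat equation $\curl w=w$. It then suffices to produce, for each wave vector $\xi_j$, an eigenfield on~$\SS^3$ whose rescaled local expression near~$p_0$ is $a_j e^{i\xi_j\cdot y}+O(1/\la)$ in $C^m(B)$; summing these yields the desired~$u$. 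The delicate point is to leverage the high multiplicity (of order $\la^2$) of the curl eigenspace together with the explicit group-theoretic description of its elements so as to prescribe arbitrary local plane-wave behavior at~$p_0$, and the restriction to odd~$\la$ enters through the integer-versus-half-integer dichotomy of the $\SU(2)$-weights labelling the eigenfields.
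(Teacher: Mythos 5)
Your treatment of the torus is essentially correct and close to the paper's: both reduce to a Herglotz representation of (a truncation of) $v$ over $\SS^2$, discretize it into finitely many plane waves, and then use Duke's equidistribution of the points $k/\la$, $k\in\ZZ^3$, $|k|=\la$, for odd $\la$ to move the frequencies onto the rescaled lattice sphere. The only real difference is cosmetic: you insist that each plane wave be a Beltrami field from the outset (adjusting the amplitudes so that $ik_j\times a_j'=\la a_j'$), whereas the paper keeps Helmholtz plane waves with unconstrained amplitudes and restores the Beltrami property only at the very end by applying the operator $\frac{1}{2\la^2}(\curl\curl+\la\curl)$, which acts as the identity on the limiting field because $\curl\curl v+\curl v=2v$. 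Both routes work.

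The sphere is where your proposal has a genuine gap: the paragraph on $\SS^3$ is a statement of what must be proved, not a proof. The crux of the whole theorem is precisely the step you defer --- producing, for a prescribed flat profile, a genuine curl eigenfield on $\SS^3$ with eigenvalue $\la$ whose rescaled expression in a ball of radius $1/\la$ matches that profile to $O(1/\la)$ in $C^m$. Observing that the rescaled equation becomes the flat one ``to leading order'' gives only the rigidity direction (every high-energy eigenfield looks locally flat); the converse is exactly where the high multiplicity must be exploited, and your sketch offers no mechanism for it. The paper's mechanism has two ingredients that are absent from your outline: (i) it approximates $v$ not by plane waves but by shifted spherical Bessel functions $j_0(|x-x_n|)$, because these --- and not plane waves --- arise as the $1/\La$-rescaled limits of the zonal spherical harmonics $C_\La(p\cdot p_n)$ via the Darboux/Mehler--Heine asymptotics of the Gegenbauer polynomials; (ii) the passage from the scalar eigenfunctions $Y_i$ to a vector curl eigenfield is accomplished by multiplying by the Hopf fields $h_i$ and computing the Hodge Laplacian of $Y_1h_1+Y_2h_2+Y_3h_3$, which shows that $\frac{1}{2\La^2}\curl(\curl+\La)(Y_1h_1+Y_2h_2+Y_3h_3)$ is an exact eigenfield of $\curl$ with eigenvalue $\La+2$. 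Without analogues of (i) and (ii), ``representation theory of $\SO(4)$ plus WKB'' does not yield the theorem. Incidentally, your explanation of the parity restriction on the sphere is off: the paper points out that oddness of $\la$ is never used there (the result holds for all sufficiently large eigenvalues on $\SS^3$); the restriction to odd $\la$ is needed only on $\TT^3$, through the equidistribution of the rational points $\{\xi\in\SS^2\cap\mathbb{Q}^3:\la\xi\in\ZZ^3\}$.
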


Let us now show how this result can be exploited to prove the main
theorem. For this, let $\Phi'$ be a diffeomorphism of $\MM$ mapping
the set $\cS$ into the ball $\BB_{1/\la}$, and the ball $\BB_{1/\la}$
into itself. (In $\SS^3$, the existence of such a diffeomorphism is
trivial, while in the case of $\TT^3$ it follows from the assumption
that $\cS$ is contained in a contractible set.) Furthermore, given a
positive number~$\La$ let us
denote the rescaling with factor~$\La$ by $\Theta_\La(x):=\La x$. We can now define a set $\cS'$
of finitely many closed curves and tubes in the ball $B$ as
$$
\cS':=(\Theta_\la\circ \Psi\circ \Phi')(\cS)\,.
$$
The following result is a straightforward consequence of the main theorem in~\cite{Acta}:

\begin{theorem}\label{Acta}
  There is a Beltrami field $v$ in $\RR^3$ satisfying $\curl v=v$ 
  and an orientation-preserving diffeomorphism $\Phi_0$ of $\RR^3$,
  which coincides with the identity in the complement of $B$, such
  that $\Phi_0(\cS')$ is a union of vortex lines and vortex tubes
  of~$v$. Furthermore, this set is structurally stable.
\end{theorem}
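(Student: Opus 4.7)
My plan is to invoke the main theorem of \cite{Acta} applied to the compact set $\cS'$, and then modify the resulting diffeomorphism by a cutoff so that it equals the identity outside the unit ball $B$. A first easy observation is that the conditions placed on $\Phi'$ in the preceding paragraph only require $\Phi'(\cS)\subset\BB_{1/\la}$; I am free to strengthen this to $\Phi'(\cS)\subset\BB_{1/(2\la)}$, so that after applying $\Theta_\la\circ\Psi$ the set $\cS'$ lies well inside $B$, say $\cS'\subset B_{1/2}$.

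The main theorem of \cite{Acta}, applied to $\cS'$, yields a Beltrami field $v$ on $\RR^3$ with $\curl v=v$ and an orientation-preserving diffeomorphism $\widetilde\Phi$ of $\RR^3$, which can be chosen arbitrarily $C^m$-close to the identity on $\overline B$, such that $\widetilde\Phi(\cS')$ is a structurally stable union of vortex lines and vortex tubes of $v$. (The closeness of $\widetilde\Phi$ to the identity comes from the fact that in \cite{Acta} the diffeomorphism is supplied by the structural stability of the hyperbolic vortex lines and KAM tori of a local Beltrami field, matched against the Runge-type global approximation $v$, and can be made as small as desired in $C^m$ by sharpening the approximation.) I now localize $\widetilde\Phi$: pick a smooth cutoff $\chi\colon\RR^3\to[0,1]$ equal to $1$ on $B_{1/2}$ and vanishing outside $B_{3/4}$, and set
\[
\Phi_0(x):=x+\chi(x)\bigl(\widetilde\Phi(x)-x\bigr).
\]
By construction $\Phi_0$ agrees with $\widetilde\Phi$ on $B_{1/2}\supset\cS'$ and with the identity outside $B_{3/4}\subset B$, so $\Phi_0(\cS')=\widetilde\Phi(\cS')$ is the required union of vortex structures of $v$, and the structural stability is inherited from \cite{Acta}.

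The only delicate point is verifying that $\Phi_0$ is actually a diffeomorphism. Differentiating the defining formula yields
\[
\nabla\Phi_0 = I+\nabla\chi\otimes(\widetilde\Phi-\id)+\chi\,\nabla(\widetilde\Phi-\id),
\]
and since $\|\nabla\chi\|_\infty$ is a fixed constant depending only on the cutoff while $\|\widetilde\Phi-\id\|_{C^1(\overline B)}$ can be made arbitrarily small by \cite{Acta}, the Jacobian of $\Phi_0$ is uniformly close to the identity matrix on $\RR^3$. Hence $\Phi_0$ is a global orientation-preserving diffeomorphism of $\RR^3$ that reduces to the identity outside $B$, completing the argument. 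The main (mild) obstacle is thus simply bookkeeping in the cutoff step, which is not a genuine difficulty because the freedom in the Runge-type approximation underlying \cite{Acta} lets us take $\widetilde\Phi-\id$ as small as needed in any $C^m$ norm.
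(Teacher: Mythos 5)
There is a genuine gap, and it lies in your very first step: you have mis\-stated what the main theorem of \cite{Acta} provides. Applied to $\cS'$, that theorem does \emph{not} yield a Beltrami field with eigenvalue $1$; it yields a field $\tilde v$ satisfying $\curl\tilde v=\tilde\la\,\tilde v$ for some \emph{small} positive constant $\tilde\la<1$ (the smallness of the eigenvalue is built into the perturbative construction of the local Beltrami field around the curves and tubes in \cite{Acta}), together with a diffeomorphism $\widetilde\Phi$ close to the identity. The statement you are proving requires $\curl v=v$ exactly, and this normalization is not cosmetic: it is what makes the later rescaling by $\la$ in Theorem~\ref{T.approx} come out right. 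To achieve it one must set $v(x):=\tilde v(x/\tilde\la)$, and then the vortex configuration of $v$ is $(\Theta_{\tilde\la}\circ\widetilde\Phi)(\cS')$, a copy of $\cS'$ contracted by the factor $\tilde\la$. The map $\Theta_{\tilde\la}\circ\widetilde\Phi$ is \emph{not} $C^1$-close to the identity --- on $\cS'\subset B$ it differs from the identity by an amount of order $1-\tilde\la$, which is close to $1$ --- so your cutoff formula $\Phi_0=\id+\chi\,(\widetilde\Phi-\id)$, whose invertibility rests on $\|\widetilde\Phi-\id\|_{C^1(\overline B)}$ being small compared with $\|\nabla\chi\|_\infty^{-1}$, cannot be applied to the map that actually carries $\cS'$ onto the vortex set of $v$. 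As written, your $\Phi_0(\cS')$ is a vortex configuration of a field with eigenvalue $\tilde\la$, not $1$.

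The repair is the one the paper uses: since $\tilde\la<1$, the set $(\Theta_{\tilde\la}\circ\widetilde\Phi)(\cS')$ is contained in $B$, and the isotopy extension theorem (no smallness required) supplies a diffeomorphism $\Phi_0$ of $\RR^3$, equal to the identity outside $B$, mapping $\cS'$ onto that set. Your cutoff construction is correct as a lemma, but it addresses a non-issue --- the diffeomorphism furnished by \cite{Acta} can already be taken to be the identity outside $B$ --- while missing the step that is actually needed, namely the eigenvalue normalization and the consequent loss of the near-identity property. A minor further point: for the use made of this theorem later, it is worth recording, as the paper does, the precise form of the structural stability (elliptic permanence under $C^1$-small perturbations for the closed vortex lines, KAM stability under $C^4$-small volume-preserving perturbations for the tubes), rather than citing it as a black box.
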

\begin{proof}
It was shown in~\cite{Acta} that there is a Beltrami field $\tilde v$
in $\RR^3$, satisfying 
\[
\curl \tilde v=\tilde\la \tilde v
\]
for some small
positive constant~$\tilde\la<1$, and an orientation-preserving diffeomorphism $\widetilde\Phi$ of
$\RR^3$ that is the identity in the complement of $B$ such that
$\widetilde\Phi(\cS')$ is a set of closed vortex lines and vortex
tubes of $\tilde v$. The closed vortex lines are elliptic trajectories
of $\tilde v$ and the boundaries of the vortex tubes are
KAM-nondegenerate invariant tori of $\tilde v$. The theorem follows
setting $v(x):=\tilde v(x/\tilde\la)$, which satisfies the equation $\curl
v=v$ in $\RR^3$, and noticing that $(\Theta_{\tilde\la}\circ
\widetilde\Phi)(\cS')$ is a set of closed vortex lines and vortex
tubes of $v$. Since this set is contained in $B$ because $\tilde\la<1$, it
is standard that there exists a diffeomorphism $\Phi_0$ of $\RR^3$ mapping $\cS'$ onto $\Theta_{\tilde\la}\circ \widetilde\Phi(\cS')$ which is the identity in the complement of $B$. The closed vortex lines in the set $\Phi_0(\cS')$ are structurally stable under $C^1$-small perturbations by the elliptic permanence theorem, while the vortex tubes are structurally stable under $C^4$-small volume-preserving perturbations by the KAM theorem.  
\end{proof}

Let us now combine Theorems~\ref{T.approx} and~\ref{Acta} to conclude
the proof of Theorem~\ref{T.main}.
Theorem~\ref{T.approx} guarantees that, for any large enough odd
integer $\la$, the Beltrami field~$v$ constructed in
Theorem~\ref{Acta} can be approximated  in the sense of
Eq.~\eqref{nose} by a Beltrami field~$u$ defined on~$\MM$. Then it is
not hard to see that the
structural stability of the set $\Phi_0(\cS')$ of closed vortex lines
and vortex tubes of $v$ implies the existence of a diffeomorphism
$\Phi_1$ of~$\RR^3$, which is the identity in the complement of $B$,
such that $\Phi_1(\cS')\subset B$ is a set of structurally
stable closed vortex lines and vortex tubes of the rescaled field
\begin{equation}\label{eqqq}
\Psi_* u\Big(\frac{\cdot}{ \la}\Big)\,.
\end{equation}
Indeed, because of the elliptic permanence theorem, this claim is
immediate in the case of closed vortex lines provided that the number
$m$ appearing in the approximation estimate~\eqref{nose} is at
least~1. For the case of vortex tubes one can use that the Beltrami
field~$u$ is divergence-free in $\MM$, which ensures that the field~\eqref{eqqq}
preserves a smooth volume 3-form in $B$ that is a small perturbation of the
Euclidean one, namely
$$
(\Phi_*\mu)\Big(\frac{\cdot}{\la}\Big)=\mu_0+O(\la^{-1})\,.
$$
Here $\mu$ and $\mu_0$ respectively denote the canonical volume
3-forms of $\MM$ and $\RR^3$. Hence, taking $m\geq 4$ in the
approximation estimate~\eqref{nose}, this enables us to apply the KAM
theorem for volume-preserving fields in $\RR^3$, which ensures the
existence of the aforementioned diffeomorphism~$\Phi_1$ yielding the
desired set of vortex tubes of the rescaled field~\eqref{eqqq}. (For
the benefit of the reader let us recall that, in order to prove this
KAM result, one takes a Poincar\'e section transversal to the tube
of~$v$ under consideration, thereby reducing the problem to
perturbations of a nondegenerate twist map of the annulus with the
intersection property. It is then standard that one can apply a
Moser-type twist theorem to guarantee the preservation of the
invariant tori. The details, which go as in~\cite[Section 7.4]{Acta}, are omitted.)

It follows from the above discussion that the diffeomorphism $\Phi$ of
$\MM$ can be then defined as
$$
\Phi(x):=\begin{cases}
\Phi'(x)& \text{if } x\not\in \Phi'^{-1}(\BB_{1/\la})\,,\\
(\Psi^{-1}\circ \Theta_{1/\la}\circ \Phi_1\circ\Theta_\la\circ\Psi\circ\Phi' )(x)& \text{if }x\in \Phi'^{-1}(\BB_{1/\la})\,.
\end{cases}
$$
The set $\Phi(\cS)$ is then the union of structurally stable closed
vortex lines and vortex tubes of the Beltrami field $u$, so the main
theorem follows.

\section{Proof of Theorem~\ref{T.approx} in the sphere}
\label{S.sphere}

In this section we show that for any Beltrami field $v$ in $\RR^3$
there exists a Beltrami field $u$ in $\SS^3$ satisfying
$\curl u=\la u$ whose dynamics in a ball of radius $\la^{-1}$ is very
close to the dynamics of $v$ in the unit ball. The proof is divided in
three steps. In the first step we show that the Beltrami field $v$ can
be approximated in $B$ by a field $w$ that is a finite sum of
spherical Bessel functions $j_0(|x-x_n|)$ centered at different points
$x_n\in\RR^3$ (Proposition~\ref{P.Bessel}). The field $w$ is not
generally a Beltrami field, however. In the second step we show that
one can take three spherical harmonics $Y_1,Y_2,Y_3$ in $\SS^3$ of
energy $\la(\la-2)$ whose behaviors in a ball of radius $1/\la$
respectively correspond to those of the three components of the field
$w$ in a ball of radius $1$, provided that $\la$ is large enough
(Proposition~\ref{P.sphharm}). Finally, in the third step we construct
a Beltrami field $u$ in $\SS^3$ of energy $\la$, using as key
ingredients the spherical harmonics $Y_k$ and a basis of Hopf fields,
so that $u$ approximates the field $v$ in the sense of
Eq.~\eqref{nose} (Proposition~\ref{P.Hopf}).

For notational convenience, in this section we will write
$\La:=\la-2$. Notice that $\La$ is then a large integer.

\subsubsection*{Step 1: Approximating the Beltrami field~$v$ by sums of
  shifted spherical Bessel functions}

The first step of the proof of Theorem~\ref{T.approx} consists in
showing that there is a finite sum $w$ of spherical Bessel functions~$j_0$ centered at different
points that approximates the Beltrami field $v$ in the unit ball of
$\RR^3$. The field~$w$ is not a Beltrami field but, just as~$v$, it
satisfies the Helmholtz equation
$$
\Delta w+w=0\,.
$$

\begin{proposition}\label{P.Bessel}
For any $\de>0$, there is a finite radius $R$ and finitely many
constants $\{c_n\}_{n=1}^N\subset\RR^3$ and $\{x_n\}_{n=1}^N\subset B_R$ such that the
field
\[
w:=\sum_{n=1}^N c_n\, j_0(|x-x_n|)
\]
approximates the Beltrami field~$v$ in the ball $B$ as
\[
\|v-w\|_{C^{m+2}(B)}<\de\,.
\]
\end{proposition}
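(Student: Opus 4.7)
\emph{Reduction and duality setup.} Taking the curl of $\curl v = v$ and using $\Div v = 0$ gives $\Delta v + v = 0$ componentwise, so each Cartesian component $v^i$ is an entire real-analytic scalar solution of the Helmholtz equation. The vector-valued statement of the proposition follows from its scalar analogue by approximating each component separately and then merging the three resulting point sets, padding coefficients with zeros, to form vectors $c_n \in \RR^3$. Let $\cE$ denote the space of entire scalar solutions of $\Delta f + f = 0$ endowed with the $C^{m+2}(\overline B)$ topology, and $\cD \subset \cE$ the linear span of $\{j_0(|\cdot - y|) : y \in \RR^3\}$. By Hahn--Banach it suffices to show that any continuous linear functional on $C^{m+2}(\overline B)$ that annihilates $\cD$ also annihilates $\cE$. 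Such a functional is represented by a distribution $T$ of order at most $m+2$ supported in $\overline B$, and the annihilation hypothesis is equivalent to the identical vanishing of
\[
F(y) := \lan T_x,\, j_0(|x - y|) \ran, \qquad y \in \RR^3.
\]

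\emph{From $F\equiv 0$ to annihilation of a basis of $\cE$.} Since $y \mapsto j_0(|\cdot - y|)$ is real-analytic from $\RR^3$ into $C^{m+2}(\overline B)$, the Taylor expansion of $j_0(|x-y|)$ about $y=0$ converges in that topology; combined with the identity $\pa_y^\al j_0(|x-y|)|_{y=0} = (-1)^{|\al|}\, \pa_x^\al j_0(|x|)$, the vanishing of $F$ yields
\[
\lan T,\, \pa^\al j_0(|\cdot|) \ran = 0 \qquad \text{for every multi-index } \al.
\]
The plane-wave formula $j_0(|x|) = (4\pi)^{-1}\int_{\SS^2} e^{ik \cdot x}\, d\si(k)$, together with the Rayleigh expansion of $e^{ik \cdot x}$ and orthonormality of spherical harmonics on $\SS^2$, yields the classical identity
\[
P_{\ell,\mu}(\nabla)\, j_0(|x|) = (-1)^\ell\, j_\ell(|x|)\, Y_\ell^\mu(\hat x),
\]
where $P_{\ell,\mu}(x) := |x|^\ell Y_\ell^\mu(\hat x)$ is the solid harmonic polynomial associated with $Y_\ell^\mu$. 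Because $P_{\ell,\mu}(\nabla)$ is a linear combination of partial derivatives of order $\ell$, every basis element $j_\ell(|x|) Y_\ell^\mu(\hat x)$ of $\cE$ lies in the span of $\{\pa^\al j_0(|\cdot|)\}_\al$ and is therefore annihilated by $T$. Any $f \in \cE$ admits an expansion $f = \sum_{\ell, \mu} a_{\ell,\mu}\, j_\ell(|x|)\, Y_\ell^\mu(\hat x)$ converging in $C^{m+2}(\overline B)$, so continuity of $T$ gives $\lan T, f \ran = 0$, completing the duality argument.

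\emph{Expected obstacle.} The main technical point I would have to verify carefully is the $C^{m+2}(\overline B)$ convergence of the Bessel--spherical-harmonic expansion of an entire Helmholtz solution; this follows from the superexponential decay (in $\ell$) of $\sup_{|x|\le 1}|j_\ell(|x|)|$ together with standard growth estimates on the coefficients $a_{\ell,\mu}$, but requires some bookkeeping. The plane-wave identity is essentially a one-line computation given the Rayleigh formula. Unlike the situation in \cite{Annals,Acta}, no Runge-type global approximation theorem is needed here because the target $v$ is already an entire solution on $\RR^3$; and that the centers $x_n$ can be placed in some $B_R$ is automatic since only finitely many are used.
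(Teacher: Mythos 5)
Your argument is correct, but it proves the proposition by a genuinely different route than the paper. You establish density of $\operatorname{span}\{j_0(|\cdot-y|):y\in\RR^3\}$ in the space of entire Helmholtz solutions by Hahn--Banach duality: a functional $T$ of order $\leq m+2$ supported in $\overline B$ that kills all translates of $j_0$ must, by analyticity of $y\mapsto j_0(|\cdot-y|)$, kill all $\pa^\al j_0$, hence (via the Hobson-type identity $P_{\ell,\mu}(\nabla)j_0=(-1)^\ell j_\ell Y_\ell^\mu$) all Fourier--Bessel basis elements, hence all of $\cE$. The paper instead argues constructively: it truncates the Fourier--Bessel series of $v$ in $L^2(B_2)$, invokes Herglotz's theorem to write the truncation as $\int_{\SS^2}f_1(\xi)e^{ix\cdot\xi}\,d\si(\xi)$ with $f_1\in L^2(\SS^2)$, smooths $f_1$, approximates the smoothed density uniformly on $\SS^2$ by a finite exponential sum $\sum_n c_n e^{-ix_n\cdot\xi}$ (by truncating a Fourier integral to $B_R$ and discretizing it as a Riemann sum), and then integrates back over $\SS^2$ to produce $w=\sum_n c_n j_0(|x-x_n|)$; the final upgrade from $L^2(B_2)$ to $C^{m+2}(B)$ uses the same elliptic-estimate bootstrap you would need to justify $C^{m+2}(\overline B)$ convergence of your Fourier--Bessel expansion (and is in fact an easier substitute for your direct estimates on $\sup_{B}|j_\ell|$ versus the coefficient growth). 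Your proof is shorter and cleaner for this statement in isolation; the paper's pays for its length with an explicit, quantitative construction whose Riemann-sum discretization of a spherical Fourier representation is reused almost verbatim in the torus case (Section~\ref{S.torus}), where the dual points must additionally be chosen rational --- a constraint the abstract density argument cannot accommodate. The remaining points you flag (the $C^{m+2}$ convergence bookkeeping, the automatic boundedness of finitely many centers, and the reduction to scalar components by merging point sets) are all genuinely routine.
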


The proof of this proposition will be presented in Section~\ref{S.Prop1}.

\subsubsection*{Step 2: Approximating the field~$w$ by high-energy
  spherical harmonics}
Let us write the vector field $w$ in terms of its components $w^i$ in the Cartesian basis $\{e_i\}_{i=1}^3$ of~$\RR^3$:
$$
w=\sum_{i=1}^3w^ie_i\,.
$$
Each component $w^i$ is a solution of the Helmholtz equation $\Delta
w^i+w^i=0$ in $\RR^3$. We now show that for any large enough integer
$\La$, there exists a spherical harmonic $Y_i$ on $\SS^3$ with
energy~$\La(\La+2)$ that behaves in the ball $\BB_{1/\La}$ as $w^i$
does in the unit ball. The proof of this result is based on the
asymptotic expressions for the ultraspherical polynomials, which are
the building blocks for any spherical harmonic on $\SS^3$, and
exploits in a crucial way the expression for~$w$ as a finite sum of
spherical Bessel functions that we obtained in Step~1:

\begin{proposition}\label{P.sphharm}
Given any positive constant $\de$, for any large enough integer~$\La$ there is a spherical harmonic $Y_i$ on
$\SS^3$ with energy~$\La(\La+2)$ such that
\begin{equation*}%\label{prop2e1}
\bigg\|w^i-Y_i\circ \Psi^{-1}\Big(\frac\cdot \La\Big)\bigg\|_{C^{m+2}(B)}<\de\,.
\end{equation*}
\end{proposition}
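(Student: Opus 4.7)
The plan is to invoke a Mehler--Heine-type asymptotic for ultraspherical polynomials. Recall that the zonal spherical harmonic on $\SS^3$ of degree $\La$ centered at a point $p$ can be written, up to a multiplicative constant, as
\[
Z_{p,\La}(q) = \frac{\sin\bigl((\La+1)\, d(p,q)\bigr)}{\sin d(p,q)},
\]
and is an eigenfunction of the spherical Laplacian with eigenvalue $-\La(\La+2)$. Writing $d(p,q) = r/\La$ with $r$ bounded and expanding both sines in powers of $1/\La$ yields
\[
\La^{-1}\, Z_{p,\La}(q) = j_0(r) + O(\La^{-1}),
\]
with remainder smooth in $r$ on compact sets. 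Thus a zonal harmonic of very large degree, rescaled on a geodesic ball of radius $O(1/\La)$ around its center, behaves like the spherical Bessel function~$j_0$ of the rescaled distance.

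With this asymptotic in hand, the construction goes as follows. Let $\{x_n\}_{n=1}^N \subset B_R$ and $\{c_n\}_{n=1}^N \subset \RR^3$ be given by Proposition~\ref{P.Bessel}, so that $w^i(x)=\sum_n c_n^i\, j_0(|x-x_n|)$. For every $\La \geq R$ define $p_n := \Psi^{-1}(x_n/\La) \in \BB_{1/\La}$, which is well-defined, and set
\[
Y_i := \La^{-1} \sum_{n=1}^N c_n^i\, Z_{p_n,\La},
\]
a spherical harmonic on $\SS^3$ of energy $\La(\La+2)$ by linearity. For $x \in B$ and $q_x := \Psi^{-1}(x/\La)$, the expansion of the metric in normal coordinates, $g_{ij}(y) = \de_{ij} + O(|y|^2)$ on $\BB$, gives
\[
d(p_n, q_x) = \frac{|x - x_n|}{\La}\bigl(1 + O(\La^{-2})\bigr),
\]
with remainder smooth in $x$. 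Substituting this into the Mehler--Heine expansion and summing over $n$ produces the pointwise estimate, with total error $O\bigl(N \max_n |c_n^i|/\La\bigr)$, which is below $\de$ for $\La$ large enough.

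The main obstacle is promoting this pointwise bound to the $C^{m+2}(B)$ norm claimed in the proposition. The subtlety is that $Z_{p,\La}$ oscillates at frequency $\La$, so each derivative in $q$ produces a factor of $\La$; on the other hand, the chain rule for the composition $x \mapsto \Psi^{-1}(x/\La)$ contributes a compensating $\La^{-1}$ per derivative, so that every $x$-derivative of $Y_i\circ \Psi^{-1}(x/\La)$ remains $O(1)$. To justify this at every order up to $m+2$, one differentiates the explicit closed form of $Z_{p,\La}$ using the Fa\`a di Bruno formula and expands each resulting term in powers of $\La^{-1}$; the leading contribution matches the corresponding derivative of $j_0(|x-x_n|)$ exactly, while the remainder is $O(\La^{-1})$ uniformly in $x \in B$, in the base points $p_n \in \BB_{1/\La}$, and in the multi-index of size $\leq m+2$. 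Once the uniform expansion is in place, linearity and the triangle inequality yield Proposition~\ref{P.sphharm}.
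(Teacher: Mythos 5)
Your construction is the same as the paper's: you center one zonal (ultraspherical) harmonic of degree $\La$ at each point $p_n=\Psi^{-1}(x_n/\La)$, weight it by $c_n^i$, and use a Mehler--Heine-type asymptotic to match $j_0(|x-x_n|)$ after rescaling. A pleasant feature of your write-up is that on $\SS^3$ you exploit the exact closed form $Z_{p,\La}(q)=\sin((\La+1)d)/\sin d$ (i.e.\ $C_\La^{(1)}=U_\La$), so the $C^0$ estimate follows from elementary Taylor expansion of sines rather than from Darboux's asymptotic formula for Jacobi polynomials, which is what the paper invokes; your normalization and the distance expansion $d(p_n,q_x)=\La^{-1}|x-x_n|(1+O(\La^{-2}))$ are both correct, so the $C^0$ part is sound.

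Where you genuinely diverge is in promoting the bound to $C^{m+2}(B)$, and this is also where your sketch has a real (though fixable) hole. You propose to differentiate the composition $x\mapsto Z_{p_n,\La}(\Psi^{-1}(x/\La))$ directly via Fa\`a di Bruno through the distance function, but $d(p_n,\cdot)$ and $|x-x_n|$ are not smooth at $x=x_n$, so the literal chain-rule computation breaks down precisely at the centers; the smoothness of $Z_{p_n,\La}$ and of $j_0(|x-x_n|)$ comes from evenness in $d$, and to make your argument rigorous you would have to differentiate instead in the smooth variable $c=p_n\cdot q_x$ (in which $Z_{p_n,\La}$ is the polynomial $U_\La$) and track the growth $U_\La^{(k)}(c)=O(\La^{2k+1})$ near $c=1$ against $\pd_x^\al c=O(\La^{-2})$ --- delicate but workable bookkeeping that your proposal asserts rather than carries out. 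The paper avoids all of this: it only establishes the $C^0$ bound from the asymptotics, then observes that $w^i$ satisfies $\De_0 w^i+w^i=0$ while the rescaled harmonic $\tY_i$ satisfies $\De_0\tY_i+\tY_i=\La^{-1}A\tY_i$ with $A$ a second-order operator with $\La$-uniformly bounded coefficients, so that standard elliptic (Schauder) estimates applied to the difference bootstrap the $C^0$ bound to $C^{m+2}$ in one step. I would recommend adopting that elliptic-regularity step: it is shorter, and it is exactly the ingredient your proposal is missing to make the derivative estimates airtight.
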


The proof of this proposition is given in Section~\ref{S.Prop2}.

\subsubsection*{Step 3: Construction of the Beltrami field on~$\SS^3$
  using spherical harmonics and Hopf fields}

Let us consider the three positively oriented orthonormal Hopf
vector fields in $\SS^3$ that, in terms of the Cartesian coordinates
of~~$\RR^4$, are explicitly given by
\begin{align*}
h_1&:=(-x_4,x_3,-x_2,x_1)\,,\\
h_2&:=(-x_3,-x_4,x_1,x_2)\,,\\
h_3&:=(-x_2,x_1,x_4,-x_3)\,.
\end{align*}
It is well known that they are curl
eigenfields with eigenvalue $2$, that is,
$$
\curl h_i=2h_i\,.
$$
We have taken the the Cartesian basis $e_i$ of~$\RR^3$ so that $\Psi_*h_i(0)=e_i$.

In the following proposition we show how to construct a Beltrami field on $\SS^3$ using the spherical harmonics $Y_i$ obtained in Proposition~\ref{P.sphharm} and the Hopf fields $h_i$ so that it approximates the Beltrami field $v$ in a suitable sense. 

\begin{proposition}\label{P.Hopf}
The vector field on the sphere
\[
u:=\frac1{2\La^2}\curl(\curl+\La)\,(Y_1h_1+Y_2h_2+Y_3h_3)
\]
is a Beltrami field satisfying $\curl u=(\La+2)u$ and approximates~$v$ as
\[
\bigg\| \Psi_*u\Big(\frac\cdot \La\Big)-v\bigg\|_{C^m(B)}<C\de\,,
\]
provided that $\La$ is sufficiently large. 
\end{proposition}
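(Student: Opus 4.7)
The plan is to separate the proposition into two logically independent parts: the algebraic fact that $u$ is a $\curl$-eigenfield with eigenvalue $\La+2$, and the analytic fact that its rescaled pushforward $\tilde u(x):=\Psi_* u(x/\La)$ is close to $v$ in $C^m(B)$.

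For the Beltrami identity I would prove, by direct computation, that for every eigenfunction $Y\in V_\La$ (i.e. $-\Delta Y=\La(\La+2)Y$) and every Hopf field $h$,
\[
\curl(\curl+\La)(Yh)=(\La+2)(\curl+\La)(Yh)+\nabla(hY).
\]
Starting from $\curl(Yh)=\nabla Y\times h+2Yh$ (which uses $\curl h=2h$), a second application of $\curl$ combined with the 3-manifold identity $\curl(A\times B)=A\Div B-B\Div A-[A,B]$, applied with $A=\nabla Y$ and $B=h$ and with $\Div h=0$, $\Div\nabla Y=-\La(\La+2)Y$, produces the desired right-hand side up to a remainder $-[\nabla Y,h]$. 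Here the Killing property of the Hopf field enters crucially: since $\cL_h g=0$, the musical isomorphism commutes with $\cL_h$, and combined with $\cL_h(dY)=d(hY)$ this yields $[\nabla Y,h]=-\nabla(hY)$. Summing over $i$ and using $\sum_i h_iY_i=\Div F$ for $F:=\sum_i Y_ih_i$, I would obtain $\curl(\curl+\La)F=(\La+2)(\curl+\La)F+\nabla\Div F$, so a final application of $\curl$ annihilates the gradient and gives $\curl u=(\La+2)u$.

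For the approximation, I would transfer everything to the unit ball via $\Psi$ and the map $x\mapsto x/\La$. Because the $\SS^3$-metric in normal coordinates is $\delta_{ij}+O(|y|^2)$ and we work at $|y|=O(\La^{-1})$, the intrinsic curl on $\SS^3$ differs from the Euclidean curl by coefficients of size $O(\La^{-2})$; combined with the chain-rule factor $\La^{-1}$ per $x$-derivative, a routine bookkeeping gives
\[
\tilde u(x)=\tfrac12\curl_{\RR^3}(\curl_{\RR^3}+1)\tilde F(x)+O(\La^{-1})\quad\text{in }C^m(B),
\]
where $\tilde F(x):=\Psi_*F(x/\La)=\sum_i\bigl(Y_i\circ\Psi^{-1}(x/\La)\bigr)\,\Psi_*h_i(x/\La)$. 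Proposition~\ref{P.sphharm} yields $\|Y_i\circ\Psi^{-1}(\cdot/\La)-w^i\|_{C^{m+2}(B)}<\de$, while smoothness of $h_i$ and the normalization $\Psi_*h_i(0)=e_i$ give $\|\Psi_*h_i(\cdot/\La)-e_i\|_{C^{m+2}(B)}=O(\La^{-1})$, so $\|\tilde F-w\|_{C^{m+2}(B)}=O(\de+\La^{-1})$.

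Finally, since each component of $w$ satisfies the scalar Helmholtz equation $\Delta w^i+w^i=0$, the vector identity $\curl^2 w=\nabla\Div w-\Delta w$ gives $\tfrac12\curl(\curl+1)w=\tfrac12(w+\curl w+\nabla\Div w)$; the same formula with $v$ in place of $w$ collapses to $v$ itself because $\curl v=v$ and $\Div v=0$. Since $\tfrac12\curl(\curl+1)$ is a second-order operator bounded from $C^{m+2}$ to $C^m$, the bound $\|w-v\|_{C^{m+2}(B)}<\de$ from Proposition~\ref{P.Bessel} propagates to $\|\tfrac12\curl(\curl+1)(w-v)\|_{C^m(B)}\leq C\de$, and combining this with the two previous error bounds produces the claim once $\La$ is taken large enough. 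I expect the main obstacle to be the algebraic step in the second paragraph: without the Killing cancellation $[\nabla Y,h]=-\nabla(hY)$ the remainder would not be a pure gradient and the final $\curl$ would not kill it, so the explicit projector $\tfrac1{2\La^2}\curl(\curl+\La)$ would fail to output a genuine curl eigenfield. That this miracle is specific to the Hopf frame on $\SS^3$ is precisely what makes the construction of the proposition work.
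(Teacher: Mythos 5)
Your proposal is correct, and its analytic half (the rescaling, the identity $\tfrac12(\curl_0\curl_0+\curl_0)v=v$, and the propagation of the $C^{m+2}$ bounds from Propositions~\ref{P.Bessel} and~\ref{P.sphharm} through a fixed second-order operator) is essentially the paper's own argument, with your $\tilde F$ playing the role of the auxiliary field $\Bu$: the paper freezes the Hopf frame to the constant basis $e_i$ from the outset, while you keep $\Psi_*h_i(\cdot/\La)$ and absorb the $O(\La^{-1})$ discrepancy into the error; either works. The eigenfield property, however, you derive by a genuinely different route. The paper dualizes to the $1$-form $\tbe$ and proves the Hodge--Laplacian identity $-\De\tbe=\La(\La+2)\tbe+2\star d\tbe$ for the full sum, using $\star d\al_i=2\al_i$, orthonormality, and the commutators $[h_i,h_j]=-2\varepsilon_{ijl}h_l$, so that the leftover first-order terms cancel \emph{across} summands to reassemble $2\star d\tbe$. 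You instead establish the single-summand identity $\curl(\curl+\La)(Yh)=(\La+2)(\curl+\La)(Yh)+\nabla(hY)$, where the key cancellation is the Killing identity $[\nabla Y,h]=-\nabla(hY)$ and the leftover is a pure gradient annihilated by the outer curl rather than by cancellation between terms. Your identity checks out ($\curl(\nabla Y\times h)=-h\,\De Y-[\nabla Y,h]=\La(\La+2)Yh+\nabla(hY)$), and it buys something the paper's lemma leaves implicit: each individual field $\curl(\curl+\La)(Y_ih_i)$ is already a curl eigenfield with eigenvalue $\La+2$, the sum over the three Hopf fields being needed only for the approximation, not for the Beltrami property. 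The two computations are equivalent in content --- your summed identity is exactly the dual of the paper's Laplacian formula via $\curl\curl=\nabla\Div-\De$ --- but yours isolates the geometric mechanism (Killing $\Rightarrow$ the obstruction is exact) more transparently. The one step you should justify explicitly is that $\curl(A\times B)=A\Div B-B\Div A-[A,B]$ holds on the round $\SS^3$ and not merely on $\RR^3$; it does, but it deserves a line.
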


Here $C$ is a constant depending on $m$ but not on $\de$. Since
rescaling $\Psi_*u$ by $\La$ is essentially equivalent to rescaling it
by~$\la$ because
\[
\frac1\La=\frac1\la\,\bigg(1 + \frac2\La\bigg)\,,
\]
Theorem~\ref{T.approx} then follows from Proposition~\ref{P.Hopf}
provided $\La$ is sufficiently large and $\de$ is
chosen small enough for $C\de$ not to be larger than~$\ep/2$. The proof
of Proposition~\ref{P.Hopf} is given in Section~\ref{S.Prop3}.

\section{Proof of Proposition~\ref{P.Bessel}}
\label{S.Prop1}

Since the Beltrami field $v$ satisfies the Helmholtz equation $\Delta
v+v=0$, upon expanding the components of~$v$ in a series of spherical
harmonics it is elementary to realize that~$v$ can be written in the
ball $B_2$ as a Fourier--Bessel series of the form
\begin{equation}\label{fbser}
v=\sum_{l=0}^\infty\sum_{m=-l}^l b_{lm}\,j_{l}(r)\, Y_{lm}(\om)
\end{equation}
that converges in $L^2(B_2)$.
Here $r:=|x|\in\RR^+$ and $\om:=x/r\in\SS^2$ are spherical
coordinates, $j_l$ is the spherical Bessel function, $Y_{lm}$ are the spherical harmonics and $b_{lm}\in\RR^3$ are constant vectors.

Since the series~\eqref{fbser} converges in $L^2(B_2)$, for any $\de'$ there is an integer $l_0$ such that the finite sum
\begin{equation*}
v_1:=\sum_{l=0}^{l_0}\sum_{m=-l}^l b_{lm}\, j_{l}(r)\, Y_{lm}(\om)
\end{equation*}
approximates the field $v$ in an $L^2$ sense, that is,
\begin{equation}\label{L2b}
\|v_1-v\|_{L^2(B_2)}<\de'\,.
\end{equation}
% Moreover, $v_1$ is a smooth vector field defined in the whole space
% $\RR^3$ and satisfying the Helmholtz equation 
% \[
% \Delta v_1+v_1=0\,,
% \]
% Standard elliptic estimates then allow us to promote the $L^2$
% bound~\eqref{L2b} to the uniform estimate
% \begin{equation}\label{first}
% \|v_1-v\|_{C^{0}(B)}<C\de'\,.
% \end{equation}

Next, let us observe that the properties of the spherical Bessel
functions imply that the field $v_1$ falls off at infinity as
$|v_1(x)|<C/|x|$. In particular, it then follows from Herglotz's
theorem (see e.g.~\cite[Theorem 7.1.27]{Hormander}) that $v_1$ can be
written as the Fourier transform of a distribution supported on the unit sphere of the form  
\begin{equation}
v_1(x)=\int_{\SS^2}f_1(\xi)\, e^{ix\cdot\xi}\, d\si(\xi)\,,
\end{equation}
where $d\si$ is the area measure induced on the unit sphere
$\SS^2:=\{\xi\in\RR^3:|\xi|=1\}$ and $f_1$ is an $\RR^3$-valued
function in $L^2(\SS^2)$.

By the density of smooth functions in $L^2(\SS^2)$, we can approximate
$f_1$ by a smooth function $f_2:\SS^2\to\RR^3$ so that their
difference is bounded as
\[
\|f_1-f_2\|_{L^2(\SS^2)}<\de'\,.
\]
Therefore the field
\begin{equation}
v_2(x):=\int_{\SS^2}f_2(\xi)\,e^{ix\cdot\xi}\, d\si(\xi)\,,
\end{equation}
approximates $v_1$ uniformly, as for any $x\in\RR^3$ the
Cauchy--Schwarz inequality yields
\begin{align}
|v_2(x)-v_1(x)|=\bigg|\int_{\SS^2}(f_2(\xi)-f_1(\xi))\,e^{ix\cdot\xi}\,
  d\si(\xi)\bigg|\leq C\|f_2-f_1\|_{L^2(\SS^2)}<C\de'\,. \label{eqv2v1}
\end{align}

Our next objective is to show that for any $\de'$ there is a radius $R>0$ and
finitely many constants $\{c_n\}_{n=1}^N\subset\RR^3$ and
$\{x_n\}_{n=1}^N\subset B_R$ such that the restriction to the unit
sphere of the smooth field in $\RR^3$  
$$
f(\xi):=\sum_{n=1}^N c_n \, e^{-ix_n\xi}
$$
approximates the field $f_2$ in the $C^0$ norm, that is,
\begin{equation}\label{eqqma}
\|f-f_2\|_{C^0(\SS^2)}<\de'\,.
\end{equation}

To prove this claim, we first extend $f_2$ to a smooth vector field $g:\RR^3\to\RR^3$ with compact support. This can be done setting 
$$
g(\xi):=\chi(|\xi|)\, f_2\bigg(\frac{\xi}{|\xi|}\bigg)\,,
$$
with $\chi(s)$ a smooth function which is equal to $1$, say, if
$|s-1|<\frac14$ and vanishes for $|s-1|>\frac12$. Since the Fourier
transform $\hg$ of $g$ is Schwartz, we easily infer that there is a
radius $R$ such that the $L^1$~norm of $\hg$ is essentially contained
in $B_R$ in the sense that
\[
\int_{\RR^3\backslash B_R}|\hg(x)|\, dx<\de'\,.
\]
It then follows that the Fourier integral representation of $g$ can be essentially
truncated to an integral over the ball $B_R$, i.e., one has the uniform bound
\begin{equation}\label{Rlarge}
\sup_{\xi\in\RR^3}\bigg|g(\xi)-\int_{B_R}\hg(x)\, e^{-ix\cdot\xi}\, dx\bigg|<\de'\,.
\end{equation} 

Now, an easy continuity argument allows us to uniformly approximate the integral
$$
\int_{B_R}\hg(x)\,e^{-ix\cdot\xi}\, dx
$$
by a finite sum 
\begin{equation}\label{fcn}
f(\xi):=\sum_{n=1}^N c_n \, e^{-ix_n\cdot \xi}
\end{equation}
with constants $c_n\in\RR^3$ and $x_n\in B_R$ in such a
way that the error introduced in the approximation is bounded by
\begin{equation}\label{eqdisc}
\sup_{\xi\in \SS^2}\bigg|\int_{B_R}\hg(x)\, e^{-ix\cdot\xi}\, dx-f(\xi)\bigg|<\de'\,.
\end{equation}

Indeed, let us cover the ball $B_R$ by finitely many closed sets
$\{U_n\}_{n=1}^N$ with piecewise smooth boundaries and pairwise
disjoint interiors such that the
diameter of each set is at most $\de''$. The
function~$e^{-ix\cdot\xi}\, \hg(x)$ being smooth, it then follows that for each $x,y\in U_n$ 
one has
\[
\sup_{\xi\in\SS^2}\big|\hg(x)\, e^{-ix\cdot\xi}-\hg(y)\, e^{-i y\cdot\xi}|< C\de''\,,
\]
where the constant $C$ depends on~$\hg$ (and therefore on~$\de'$) but
not on $\de''$. It is then straightforward that if $x_n$ is any point
in~$U_n$ and we set $c_n:=\hg(x_n)\,|U_n|$ in~\eqref{fcn}, one has
\begin{align*}
\sup_{\xi\in\SS^2}\bigg|\int_{B_R}\hg(x)\, e^{-ix\cdot\xi}\,
  dx-f(\xi)\bigg|&\leq \sum_{n=1}^N\int_{U_n} \sup_{\xi\in\SS^2}\big|\hg(x)\,\e^{-i
                   x\cdot\xi}-\hg(x_n)\, e^{-ix_n\cdot\xi}\big|\, dx\\
&\leq C\de''\,,
\end{align*}
where again $C$ depends on $\de'$ and $R$ but not on~$\de''$ or~$N$. Hence
one can take~$\de''$ small enough so that $C\de''<\de'$, thereby
proving the estimate~\eqref{eqdisc}.

Putting together the estimates~\eqref{Rlarge} and~\eqref{eqdisc} we infer that
\begin{equation*}
\|f-g\|_{C^0(\SS^2)}<C\de'\,,
\end{equation*}
with a constant independent of~$\de'$. Since the restriction
to~$\SS^2$ of the function~$g$ is precisely~$f_2$, the
estimate~\eqref{eqqma} then follows. 

Finally, if we define the vector field
\begin{equation*}
w(x):=\int_{\SS^2}f(\xi)\, e^{ix\cdot\xi}\, d\si(\xi)=\sum_{n=1}^N
c_{n}\int_{\SS^2}e^{i(x-x_{n})\cdot \xi}\,d\si(\xi)=\sum_{n=1}^N
c_{n}\, j_{0}(|x-x_{n}|)\,,
\end{equation*}
we conclude from Eq.~\eqref{eqqma} that 
\begin{equation*}
\|w-v_2\|_{C^0(\RR^3)}\leq \int_{\SS^2}|f(\xi)-f_2(\xi)|\, d\si(\xi)<C\de'\,,
\end{equation*}
so we readily infer from Eqs.~\eqref{L2b} and~\eqref{eqv2v1} the
$L^2$ bound
\begin{align}\label{cas}
\|v-w\|_{L^2(B_2)}\leq C\|w-v_2\|_{C^0(\RR^3)}+C\|v_2-v_1\|_{C^0(\RR^3)}+\|v_1-v\|_{L^2(B_2)}<C\de'\,.
\end{align}
Furthermore, as the Fourier transform of~$w$ is supported on~$\SS^2$, $w$ satisfies the Helmholtz equation
$$
\Delta w+w=0
$$
in $\RR^3$. Since the Beltrami field $v$ also satisfies the Helmholtz
equation $\Delta v+v=0$, standard elliptic estimates enable us to
promote the $L^2$ bound~\eqref{cas} to the $C^{m+2}$ estimate
\begin{equation*}
 \|v-w\|_{C^{m+2}(B)}\leq C\|v-w\|_{L^2(B_2)}< C\delta'\,,
\end{equation*}
so the proposition follows upon choosing $C\delta'<\de$.

\section{Proof of Proposition~\ref{P.sphharm}}
\label{S.Prop2}

For any positive integer $\La$, let $C_{\La}(t)$ be the ultraspherical (also called Gegenbauer) polynomial of dimension
4 and degree~$\La$, which can be defined in terms of the Jacobi polynomials
$P_{\La}^{(\alpha,\,\beta)}$ as
\begin{equation}\label{cla}
C_\La(t):=\frac{\sqrt{\pi}}{2}\frac{\Gamma(\La+1)}{\Gamma(\La+\frac{3}{2})}\,P_{\La}^{(\frac{1}{2},\,\frac{1}{2})}(t)\,,
\end{equation}
where we are using the normalization $C_\La(1)=1$ for all $\La$.

If $p,q\in \SS^3$ are two points in the 3-sphere, understood as the
subset $\{|p|=1\}$ of~$\RR^4$, the addition theorem for ultraspherical
polynomials shows that $C_\La(p\cdot q)$ can be written as a linear
combination of spherical harmonics. Specifically,
\begin{equation}\label{clasp}
C_{\La}(p\cdot q)=\frac{2\pi^2}{(\La+1)^{2}}\sum_{j=1}^{(\La+1)^{2}}Y_{\La j}(p)\,Y_{\La j}(q)\,,
\end{equation}
where $\{Y_{\La j}\}_{j=1}^{(\La+1)^{2}}$ is an arbitrary orthonormal
basis of spherical harmonics of energy $\La(\La+2)$ and $p\cdot q$
denotes the scalar product in~$\RR^4$ of the unit vectors $p,q$. Notice
that $(\La+1)^2$ is precisely the multiplicity of the eigenvalue
$\La(\La+2)$ of the Laplacian on~$\SS^3$ (or, equivalently, the dimension of the space of spherical harmonics
of energy $\La(\La+2)$).

Let us write the $i^{\text{th}}$ Cartesian component of the vector field $w$ as 
\[
w^i(x)=\sum_{n=1}^Nc_n^i\,j_{0}(|x-x_{n}|)\,,
\]
where $c_n^i$ is the $i^{\text{th}}$ component of the constant
$c_n\in\RR^3$ and the points $x_n$ are contained in the ball $B_R$.
Let us set, for any $p\in\SS^3$,
\begin{equation*}
Y_i(p):=\sum_{n=1}^N c_n^i\, C_\La(p\cdot p_n)\,,
\end{equation*}
with
\[
p_n:=\Psi^{-1}\bigg(\frac{x_{n}}{\La}\bigg)\,.
\]
Note that $p_n$ is well defined provided $\La$ is bigger than~$R$. It is obvious from Eq.~\eqref{clasp} that $Y_i$ is a spherical harmonic of energy $\La(\La+2)$.

In order to study the asymptotic properties of the spherical harmonic
$Y_i$ we first observe that, if we restrict our attention to points of
the form
\[
p:=\Psi^{-1}\bigg(\frac{x}{\La}\bigg)
\]
with $x\in B_R$ and $\La>R$, we then have
\begin{equation}\label{cos}
p\cdot p_n=\cos\big(\text{dist}_{\SS^3}(p,p_n)\big)=\cos \bigg(\frac{|x-x_n|+O(\La^{-1})}{\La}\bigg)\,,
\end{equation}
as $\La\to \infty$. Here $\text{dist}_{\SS^3}(p,p_n)$ denotes the
distance between the points $p$ and $p_n$ as measured on the sphere
$\SS^3$ and the last equality stems from the fact that $\Psi:\BB\to B$
is a patch of normal geodesic coordinates.  We will henceforth use the
notation
\begin{equation}\label{defi}
\tY_i(x):=Y_i\circ\Psi^{-1}\bigg(\frac{x}{\La}\bigg)\,.
\end{equation}

Since for $\La$ large we have the asymptotic behavior
$$
\frac{\Gamma(\La+1)}{\Gamma(\La+\frac{3}{2})}=\frac{\text{1}}{\sqrt{\La}}+O(\La^{-\frac{3}{2}})\,,
$$
we conclude from Eq.~\eqref{cos} that
\begin{equation}\label{aaa}
C_\La(p\cdot
p_n)=\bigg(\frac{\sqrt\pi}{2\sqrt{\La}}+O(\La^{-\frac{3}{2}})\bigg)\,
P_{\La}^{(\frac{1}{2},\,\frac{1}{2})}\bigg(
\cos\bigg(\frac{|x-x_n|+O(\La^{-1})}{\La}\bigg)\bigg)\,.
\end{equation}
Now Darboux's asymptotic formula for the Jacobi polynomials~\cite[Theorem 8.1.1]{Szego75} implies
\begin{equation*}
\frac{1}{\sqrt{\La}}P_{\La}^{(\frac{1}{2},\,\frac{1}{2})}\Big(\cos\frac{t}{\La}\Big)=
\frac{2}{\sqrt{\pi}}\, j_{0}(t)+O(\La^{-1})\,,
\end{equation*}
which holds uniformly for compact sets (e.g., for $|t|\leq
2R$). Therefore Eq.~\eqref{defi} can be written by virtue of Eq.~\eqref{aaa} as
\begin{align*}
\tY_i(x)&=\sum_{n=1}^N c_n^i C_\La\Big(\cos\Big(\frac{|x-x_n|+O(\La^{-1})}{\La}\Big)\Big)\\&=\sum_{n=1}^{N}c_n^i\,j_0(|x-x_n|)+O(\La^{-1})\,,
\end{align*}
provided that $\La$ is sufficiently large and $x, x_n\in B_R$. This
proves that for any $\delta'>0$ and all $\La$ large enough we have the
uniform bound
\begin{equation}\label{c0}
\|w^i-\tY_i\|_{C^0(B)}<\de'\,.
\end{equation}

To get the $C^{m+2}$ bound stated in the proposition, we notice that the eigenvalue equation 
$$\Delta Y_i+\La(\La+2)Y_i=0$$  
for the spherical harmonic $Y_i$ in $\SS^3$ can be written in terms of
the rescaled function~$\tY_i$ as
\begin{equation*}
\Delta_0\tY_i+\tY_i=\frac{1}{\La}A\tY_i\,,
\end{equation*}
where the coordinates $x$ are assumed to take values in $B$,
$\Delta_0:=\sum_i \pd_{x_i}^2$ is the flat space Laplacian acting on the
$x$ coordinates and $A$ is a scalar second-order operator of
the form
$$
A\tY_i:=-2\tY_i+G_1\, D\tY_i+G_2\, D^2\tY_i\,.
$$
Here the functions $G_i(x,\La)$ are (possibly matrix-valued) functions
that depend smoothly on all their variables and whose derivatives are
bounded independently of~$\La$ for $x\in B$, i.e.,
\begin{equation}\label{bg2a}
\sup_{x\in B}|D^\alpha_x G_i(x,\La)|< C_{\al}\,.
\end{equation} 
Here the constant $C_{\al}$ depends on the multiindex~$\al$ but not on $\La$.

By construction, the function $w^i$ satisfies the Helmholtz equation
$$
\De_0w^i+w^i=0\,,
$$
and hence the difference $w^i-\tY_i$ satisfies the equation
$$
\De_0(w^i-\tY_i)+(w^i-\tY_i)=\frac{1}{\La}A\tY_i\,.
$$
Therefore, in view of the uniform bounds~\eqref{c0} and~\eqref{bg2a},
standard elliptic estimates yield
\begin{align*}
\|w^i-\tY_i\|_{C^{m+2,\alpha}(B)}&<C\|w^i-\tY_i\|_{C^0(B)}+\frac{C}{\La}\|A\tY_i\|_{C^{m,\alpha}(B)}\\
&<C\delta'+\frac{C}{\La}\|w^i-\tY_i\|_{C^{m+2,\al}(B)}+\frac{C}{\La}\|w^i\|_{C^{m+2,\alpha}(B)}\,,
\end{align*}
which implies that
$$
\|w^i-\tY_i\|_{C^{m+2}(B)}\leq C\de'+\frac{C \|w_i\|_{C^{m+2,\al}}}{\La}<\de
$$
provided that $\La$ is large enough (which in turn implies that $\de'$
is small). This completes the proof of the proposition.

%To prove the estimate~\eqref{prop2e2}, we observe that for $\la$ large enough we can bound the Jacobi polynomials for $t\geq \la^{-1}$ as~\cite[Theorem 7.32.2]{Szego75}
%
%\begin{equation*}
%\frac{1}{\sqrt{\lambda}}P_{\lambda}^{(\frac{1}{2},\,\frac{1}{2})}(\cos(t))=
%\frac{1}{t}\,O(\la^{-1})\,,
%\end{equation*}
%which implies
%\begin{equation*}
%C_\la(\cos\big(\text{dist}(p,p_n)\big))=\frac{1}{\text{dist}(p,p_n)}\,O(\la^{-1})
%\end{equation*}
%provided that $\text{dist}(p,p_n)\geq \la^{-1}$. Therefore, for any $\la\geq \rho^{-1}$ large enough, we can estimate the $C^0$ norm of the spherical harmonic $Y_i$ in the complement $\SS^3\backslash \BB_\rho$:
%$$
%\|Y_i\|_{C^0(\SS^3\backslash \BB_\rho)}\leq \frac{C}{\rho\la}=:\delta'\,,
%$$
%where we have used that all the points $p_n\in\BB_{\frac{R}{\la}}\subset \BB_{\frac{\rho}{2}}$ if $\la$ is sufficiently large.
%

\section{Proof of Proposition~\ref{P.Hopf}}
\label{S.Prop3}

We start by defining a vector field $\tu$ on $\SS^3$ using the Hopf fields $h_i$ as
$$\tu:=Y_{1} \,h_{1}+Y_{2}\,h_{2}+Y_{3}\,h_{3}\,,$$
where the functions $Y_i$ are the spherical harmonics obtained in
Proposition~\ref{P.sphharm}. In what follows it is convenient to work
with differential forms, so let us denote by $\tbe$ and $\al_i$ the
$1$-forms that are dual to $\tu$ and $h_i$, respectively, with respect
to the canonical metric on $\SS^3$. We recall that the dual of $\curl \tu$ is the $1$-form $\star d\tbe$, with $\star$~being the Hodge star operator. 

In the following lemma we compute the action of the Hodge Laplacian on the 1-form $\tbe$ using the properties of the Hopf fields:

\begin{lemma}
The Hodge Laplacian of the $1$-form $\tbe$ dual to $\tu$ is
$$
-\De\tbe=\La(\La+2)\,\tbe+2\star d\tbe\,,
$$
\end{lemma}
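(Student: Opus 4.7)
The plan is to apply the Weitzenb\"ock identity on $\SS^3$ and then exploit the Leibniz rule together with the algebraic properties of the Hopf frame. Because the unit round $3$-sphere has $\Ric = 2g$, the Weitzenb\"ock formula for $1$-forms gives $(dd^*+d^*d)\beta = \nabla^*\nabla\beta + 2\beta$, and with the sign convention used here this reads $-\Delta\beta = \nabla^*\nabla\beta + 2\beta$. It therefore suffices to compute $\nabla^*\nabla\tbe$ and to collect terms.

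I first assemble two elementary facts about each Hopf $1$-form $\al_i$. Since $h_i$ is Killing and divergence-free, one has $d^*\al_i = 0$ and, by the classical identity for Killing $1$-forms, $\nabla_X\al_i = \tfrac12\iota_X d\al_i$; combining this with the relation $d\al_i = 2\star\al_i$ (the $1$-form version of $\curl h_i = 2h_i$) yields $\nabla_X\al_i = \iota_X\star\al_i$. A short computation using $d^*\star\al_i = \star d\al_i = 2\al_i$ shows that $d^*d\al_i = 4\al_i$, and then Weitzenb\"ock applied to $\al_i$ itself produces $\nabla^*\nabla\al_i = 2\al_i$. I will also use the eigenvalue equation $-\Delta Y_i = \La(\La+2)Y_i$ from Proposition~\ref{P.sphharm}.

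For each summand $Y_i\al_i$ the Leibniz rule for the rough Laplacian reads
\[
\nabla^*\nabla(Y_i\al_i) = (-\Delta Y_i)\al_i - 2\nabla_{\nabla Y_i}\al_i + Y_i\nabla^*\nabla\al_i\,.
\]
Substituting the three identities above and summing over $i$ I obtain
\[
\nabla^*\nabla\tbe = \La(\La+2)\tbe + 2\tbe - 2\sum_{i=1}^3 \iota_{\nabla Y_i}\star\al_i\,.
\]
The pointwise identity $\iota_X\star\omega = \star(\omega\wedge X^\flat)$ for a $1$-form $\omega$ on any oriented Riemannian $3$-manifold rewrites each of the summands as $-\star(dY_i\wedge\al_i)$, so the correction becomes $+2\sum_i\star(dY_i\wedge\al_i)$. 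On the other hand, the Leibniz rule for $d$ together with $d\al_i = 2\star\al_i$ yield
\[
\star d\tbe = \sum_{i=1}^3\star(dY_i\wedge\al_i) + 2\tbe\,,
\]
so adding the residual $2\tbe$ from Weitzenb\"ock to $\nabla^*\nabla\tbe$ gives exactly $-\Delta\tbe = \La(\La+2)\tbe + 2\star d\tbe$, as required.

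The only genuinely delicate point is the sign bookkeeping: the Ricci contribution in Weitzenb\"ock, the coefficient in the Killing identity, and the difference between $\iota_X\star\omega$ and $\star(X^\flat\wedge\omega)$ in dimension three all depend on orientation conventions, and the identity only materializes when these are fixed consistently with the paper's choice $\curl h_i = +2h_i$ for the particular Hopf triple $h_1,h_2,h_3$. Once the conventions are locked in, the lemma reduces to a pure substitution.
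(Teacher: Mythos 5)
Your proof is correct, but it takes a genuinely different route from the one in the paper. The paper's argument is a bare-hands computation in the Hopf coframe: it expands $dd^{*}\tbe$ and $d^{*}d\tbe$ term by term using $\star d\al_i=2\al_i$, the algebraic identities $\al_j\wedge\star\al_i=\delta_{ji}\,\mu$ and $\star(\al_j\wedge\al_i)=\varepsilon_{jil}\al_l$, and finally the commutation relations $[h_i,h_j]=-2\varepsilon_{ijl}h_l$ to recognize $\sum_j h_jh_j Y_i$ as $\De Y_i$ and the leftover first-order terms as $2\star d\tbe$. You instead invoke the Bochner--Weitzenb\"ock formula with $\Ric=2g$, reduce to the rough Laplacian via the Leibniz rule, and absorb the cross term $\nabla_{\nabla Y_i}\al_i$ through the Killing identity $\nabla_X\al_i=\tfrac12\iota_X d\al_i=\iota_X\star\al_i$. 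I checked the individual steps and they all hold: $d^{*}\al_i=0$ and $d^{*}d\al_i=4\al_i$ give $\nabla^{*}\nabla\al_i=2\al_i$; the Leibniz rule yields $\nabla^{*}\nabla\tbe=\La(\La+2)\tbe+2\tbe-2\sum_i\iota_{\nabla Y_i}\star\al_i$; and the three-dimensional identity $\iota_X\star\omega=\star(\omega\wedge X^{\flat})$ for $1$-forms (both sides carry exactly one Hodge star, so the identity itself is orientation-independent, even though $d\al_i=2\star\al_i$ is not) converts the correction into $2\sum_i\star(dY_i\wedge\al_i)=2\star d\tbe-4\tbe$, which exactly cancels the accumulated $4\tbe$. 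What your approach buys is conceptual clarity and some generality: it never uses the structure constants of the Hopf frame, only that each $h_i$ is a unit Killing field with $\curl h_i=2h_i$ on a manifold with $\Ric=2g$, so it would adapt to other unit Killing eigenfields. What it costs is the need to import the Weitzenb\"ock formula and the Killing $1$-form identity, whereas the paper's computation is self-contained given the frame relations. One point you should make explicit if you write this up: the $h_i$ are indeed Killing (they generate one-parameter groups of isometries of the round metric, being invariant fields on $\SU(2)\cong\SS^3$ with its bi-invariant metric), a fact the paper never states since it only uses that they are curl eigenfields.
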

\begin{proof}
The $1$-form $\tbe$ is given by $\tbe=Y_i\, \alpha_i$, where summation
over repeated indices is understood throughout. The Laplacian of $\tbe$ is then 
$$-\De\tbe:=dd^{\ast}\tbe+d^{\ast}d\tbe=-d\star d\star(Y_i\, \alpha_i)+\star d\star d(Y_i\, \alpha_i)\,.$$

Using that $\star d\alpha_i=2\alpha_i$ because $\alpha_i$ is the dual
$1$-form of the Hopf field $h_i$, and that the differential of $Y_i$
can be written as $dY_i=h_j(Y_i)\, \alpha_j$, where $h_j(Y_k)$ denotes
the action of the vector field $h_j$ on the scalar function $Y_k$, we
readily obtain 
$$d\star d\star(Y_i\, \alpha_i)=\frac12 d\star(h_{j}(Y_i)\, \alpha_{j}\wedge d\alpha_i)\,.$$ 
Observing that $\alpha_{j}\wedge
d\alpha_i=2\alpha_{j}\wedge\star\alpha_i=2\delta_{jk}\, \mu$, where
$\mu$ stands for the Riemannian volume 3-form on $\SS^3$, it follows that
\begin{equation}\label{eql1}
d\star d\star(Y_i\, \alpha_i)=d(h_i(Y_i))=h_{j}h_i(Y_i)\, \alpha_{j}\,.
\end{equation}

Analogously, a straightforward computation using that
$\star(\al_j\wedge\al_i)=\varepsilon_{jil}\al_l$, where
$\varepsilon_{jil}$ stands for the Levi-Civita permutation symbol, and the identity $\varepsilon_{iml}\varepsilon_{jkl}=\delta_{ij}\delta_{mk}-\delta_{ik}\delta_{mj}$ yields
\begin{align}
&\star d(Y_i\, \al_i)=\varepsilon_{jil}h_j(Y_i)\, \al_l+2Y_i\, \al_i\,,\label{eql2}\\
&\star d\star d(Y_i\, \alpha_i)=-h_jh_j(Y_i)\, \al_i+h_ih_j(Y_i)\, \al_j+4\varepsilon_{jil}h_j(Y_i)\, \al_l+
4Y_i\, \al_i\,.\label{eql3}
\end{align}
Finally, adding Eqs.~\eqref{eql1} and~\eqref{eql3} we obtain
\begin{align*}
-\De\tbe&=-h_{j}h_i(Y_i)\, \alpha_{j}+h_ih_{j}(Y_i)\, \alpha_{j}-h_{j}h_{j}(Y_i)\, \alpha_i+
4\varepsilon_{jil}h_j(Y_i)\, \al_l+4Y_i\, \al_i\\
&=\La(\La+2)Y_i\, \al_i+2\varepsilon_{jil}h_j(Y_i)\, \al_l+4Y_i\, \al_i\,,
\end{align*}
where we have used that $\Delta Y_i=-\La(\La+2)Y_i$ and that the
commutator of Hopf fields is $[h_i,h_j]=-2\varepsilon_{ijl}h_l$. The
lemma then follows upon noticing that 
$$
2\varepsilon_{jil}h_j(Y_i)\, \al_l+4Y_i\, \al_i=2\star d\tbe
$$ 
by Eq.~\eqref{eql2}.
\end{proof}

Using this lemma, it is easy to check that 
$$u:=\frac{1}{2\La^2}\curl(\curl+\La)\tu$$ 
is a Beltrami field with eigenvalue $\La+2$. Indeed, if $\beta$ is the dual $1$-form of $u$, we obtain
\begin{align*}
\star d\beta=\frac{1}{2\La^2}\star d\star d(\star d+\La)\tbe&=\frac{1}{2\La^2}\star d(-\De+\La\star d)\tbe\\&=\frac{\La+2}{2\La^2}\star d(\star d+\La)\tbe=(\La+2)\beta\,.
\end{align*}

To prove the $C^m$ estimate of the proposition, it is convenient to
introduce an auxiliary vector field in the unit ball $B$ of $\RR^3$ as
$$\Bu(x):=\tY_{1}(x)\,
e_1+\tY_{2}(x)\, e_2+
\tY_{3}(x)\,e_3\,,$$
where $x\in B$ and $\tY_i$ was defined in~\eqref{defi}. There is no
loss of generality in choosing the orthonormal basis $e_i$ of
$\mathbb{R}^{3}$ compatible with the Hopf fields $h_i$ in the sense
that $\Psi_{\ast}(h_i)(0)=e_i$. It is then easy to check that for
$x\in B$ one has:
\begin{align*}
\Psi_*\tu\Big(\frac{\cdot}{\La}\Big)&= \Bu+\frac{G_1}{\La}\Bu\,,\\
\Psi_*(\curl\tu)\Big(\frac{\cdot}{\La}\Big)&= \La\Big(\curl_0\Bu+\frac{G_2}{\La}\Bu+\frac{G_3}{\La}D\Bu\Big)\,,\\
\Psi_*(\curl\curl\tu)\Big(\frac{\cdot}{\La}\Big)&= \La^2\Big(\curl_0\curl_0\Bu+\frac{G_4}{\La}\Bu+\frac{G_5}{\La}D\Bu+\frac{G_6}{\La}D^2\Bu\Big)\,.\\
\end{align*}
Here $\curl_0$ denotes the Euclidean curl operator, acting on the
variables~$x$, and the functions $G_i(x,\La)$ are (possibly
matrix-valued) functions that depend smoothly on all their variables
and whose derivatives are uniformly bounded as
\begin{equation}\label{bg2}
\sup_{x\in B}|D^\alpha_x G_i(x,\La)|< C_\al\,.
\end{equation} 
Here the constant $C_\al$ depends on the multiindex~$\al$ but not on
$\La$. 

These identities and the fact that $(\curl_0\curl_0+\curl_0)v=2v$
then permits us to write
\begin{align}\label{finalest}
\Big\|\Psi_*u\Big(\frac{\cdot}{\La}\Big)-v\Big\|_{C^m(B)}&\leq\Big\|\frac12(\curl_0\curl_0
                                                           +\curl_0)(\Bu-v)\Big\|_{C^m(B)}+\frac{C}{\La}\|\Bu\|_{C^{m+2}(B)}\notag
  \\&\leq
                                                                                                                                C\|\Bu-v\|_{C^{m+2}(B)}+\frac{C}{\La}\|\Bu-w\|_{C^{m+2}(B)}\notag\\
&\qquad \qquad \qquad \qquad \quad+\frac{C}{\La}\|v-w\|_{C^{m+2}(B)}+\frac{C}{\La}\|v\|_{C^{m+2}(B)}\,.  
\end{align}
To conclude, notice that it stems from Propositions~\ref{P.Bessel} and~\ref{P.sphharm} that
\begin{align*}
\|v-w\|_{C^{m+2}(B)}&<\de\\
\|\Bu-w\|_{C^{m+2}(B)}&< 3\delta\,,  
\end{align*}
so in particular
\[
\|\Bu-v\|_{C^{m+2}(B)}\leq \|\Bu-w\|_{C^{m+2}(B)}+\|v-w\|_{C^{m+2}(B)}< 4\delta\,.
\]
Hence the proposition follows from the estimate~\eqref{finalest} upon
noticing that $v$ is a fixed vector field (so its norm is independent
of~$\La$) and choosing $\La$ large enough, which also allows us to take
$\de$ as small as one wishes.

\section{Proof of Theorem~\ref{T.approx} in the torus}
\label{S.torus}

Arguing as in the proof of Proposition~\ref{P.Bessel} we can readily
show that for any $\de>0$, there exists a vector field $v_1$ on $\RR^3$ that approximates the Beltrami field $v$ in the ball $B$ as
\begin{equation}\label{otrv}
\|v_1-v\|_{C^{0}(B)}<\de\,,
\end{equation}
and that can be represented as the Fourier transform of a distribution supported on the unit sphere of the form  
\begin{equation*}
v_1(x)=\int_{\SS^2}f(\xi)\, e^{i\xi\cdot x}\, d\si(\xi)\,.
\end{equation*}
Again $\SS^2$ denotes the unit sphere $\{\xi\in\RR^3:|\xi|=1\}$ and
$f$ is a smooth $\RR^3$-valued function on $\SS^2$.

Let us now cover the sphere $\SS^2$ by finitely many closed sets
$\{U_n\}_{n=1}^N$ with piecewise smooth boundaries and pairwise
disjoint interiors such that the
diameter of each set is at most $\de'$. We can then repeat the argument
used in the proof of Proposition~\ref{P.Bessel} to infer that, if
$\xi_n$ is any point in $U_n$ and we set
\[
c_n:=f(\xi_n)\, |U_n|\,,
\]
the field 
\[
w(x):=\sum_{n=1}^N c_{n}\, e^{i\xi_{n}\cdot x}
\]
approximates the field $v_1$ uniformly with an error proportional to~$\de'$:
\begin{equation*}%\label{mases}
\|w-v_1\|_{C^{0}(B)}<C\delta'\,.
\end{equation*}
The constant $C$ depends on $\de$ but not on $\de'$, so one can choose
the maximal diameter~$\de'$ small enough so that
\begin{equation}\label{mases}
\|w-v_1\|_{C^{0}(B)}<\delta\,.
\end{equation}
In turn, the uniform estimate 
\[
\|w-v\|_{C^0(B)}\leq \|w-v_1\|_{C^0(B)}+ \|v-v_1\|_{C^0(B)}<2\de
\]
can be readily promoted to the $C^{m+2}$ bound
\begin{equation}\label{estder}
\|w-v\|_{C^{m+2}(B)}<C\de\,.
\end{equation}
This follows from standard elliptic estimates as both $w$ (whose
Fourier transform is supported on~$\SS^2$) and $v$ satisfy the
Helmholtz equation:
\[
\De v +v =0\,,\qquad \De w+ w=0\,.
\]
Furthermore, replacing $w$ by its real part if necessary, we can safely
assume that the field~$w$ is real-valued.

Let us now observe that for any large enough odd integer~$\La$ one can choose the points $\xi_n\in U_n\subset\SS^2$ so
that they have rational components (i.e., $\xi_{n}\in
\mathbb{Q}^{3}$) and the rescalings $\La\xi_n$ are actually integer
vectors (i.e., $\La \xi_n\in\ZZ^3$). This is because rational points
$\xi\in\SS^2\cap \mathbb{Q}^3$ with $\La\xi\in\ZZ^3$ are uniformly distributed on the unit sphere as $\La\to \infty$ through odd values~\cite{Du03}.

Choosing $\xi_n$ as above, we are now ready to prove
Theorem~\ref{T.approx} in the torus. Without loss of generality, we
will take the origin as the base point $p$, so that we can identify
the ball $\BB$ with $B$ through the canonical $2\pi$-periodic
coordinates on the torus. In particular, the
diffeomorphism~$\Psi:\BB\to B$
that appears in the statement of Theorem~\ref{T.approx} can be
understood to be the identity.

Since $\La
\xi_n\in\ZZ^3$, it follows that the vector field
\begin{equation*}
\tu(x):=\sum_{n=1}^Nc_{n}e^{i\La\xi_n\cdot x}
\end{equation*}
is $2\pi$-periodic (that is, invariant under the translation $x\to
x+2\pi\, a$ for any vector $a\in\ZZ^3$). Therefore it descends to a well-defined vector field on the torus $\TT^3:=\RR^3/(2\pi\ZZ)^3$, which we will still denote by $\tu$. 

Since the Fourier transform of $\tu$ if now supported on the sphere of
radius~$\La$, $\tu$ then satisfies the Helmholtz equation on the flat torus $\TT^3$ with energy $\La^2$,
$$
\Delta\tu+\La^{2}\tu=0\,.
$$
A straightforward calculation then reveals that the vector field on
the torus
\begin{equation*}
u:=\frac{\curl\curl \tu+\La\curl \tu}{2\La^2}
\end{equation*}
satisfies the equation 
$$
\curl u=\La u\,,
$$
so it is a Beltrami field on $\TT^3$ with eigenvalue
$\la:=\La$.

Let us now notice that, with some abuse of notation,
\[
\tu\bigg(\frac x\La\bigg)=w(x)
\]
for all points $x$, say, in the ball $B$. In particular, as
the derivatives of the rescaled vector field $\tu(\cdot/\La)$ behave as
\begin{align*}
\curl \tu\Big(\frac{\cdot}{\La}\Big)=& \La\curl w\,,\\
\curl\curl \tu\Big(\frac{\cdot}{\La}\Big)=& \La^2\curl\curl w\,,
\end{align*}
it then follows that
\begin{align*}
\bigg\|u\Big(\frac{\cdot}{\La}\Big)-v\bigg\|_{C^{m}(B)}&
                                                         =\bigg\|\frac{\La^{2}\curl\curl w+\La^{2}\curl w}{2\La^{2}}-v\bigg\|_{C^{m}(B)}\\[1mm]& =\bigg\|\frac{\curl\curl (w-v)+\curl (w-v)}{2}\bigg\|_{C^{m}(B)}\\[1mm] & \leq C\|w-v\|_{C^{m+2}(B)}\\
&<C\de\,,
\end{align*}
where we have used the identity $\curl\curl v+\curl v=2v$ to pass to
the second equality and the estimate~\eqref{estder} to derive the last
inequality. The theorem then follows provided that $\de$ is chosen
small enough for $C\de<\ep$.

\section{Concluding remarks}\label{S:final}

To conclude, let us make a few simple observations about our main
result that follow from its proof:

\subsubsection*{There are many Beltrami fields with closed vortex lines and
  tubes of a given link type} Indeed, since our construction works for
any large enough odd integer~$\la$ and Beltrami fields corresponding
to different eigenvalues are $L^2$~orthogonal, there are many
non-proportional Beltrami fields with closed vortex lines and tubes
realizing any given link.

\subsubsection*{In the sphere, the result holds true for any large
  enough eigenvalue~$\la$}

Indeed, the fact that $\La$ is odd was
never used in the proof of Theorem~\ref{T.approx} in~$\SS^3$ (cf.\
Section~\ref{S.sphere}), so it stems that, given any finite union of
closed curves and tubes~$\cS$, for any integer~$\la$ with $|\la|$
greater than certain constant $\La_0(\cS)$ there is a Beltrami field
with eigenvalue~$\la$ having a structurally stable set of vortex lines and vortex tubes
diffeomorphic to~$\cS$.

\subsubsection*{In our Beltrami fields on the sphere, knots and links
    appear in pairs} In fact, using the Hopf basis $\{h_i\}_{i=1}^3$  introduced in Section~\ref{S.sphere}, any
  Beltrami field $u$ on $\SS^3$ with eigenvalue $\la:=\La+2$, with $\La$ a nonnegative integer, can be written as
$$
u=F_1\, h_1+F_2\, h_2+F_3\, h_3\,,
$$
where $F_i$ are smooth functions on the sphere. It is then easy to
check using Eq.~\eqref{eql2} that $F_i$ must be a spherical harmonic
of energy $\La(\La+2)$. Since such a spherical harmonic is known to
have parity $(-1)^\La$, in the sense that
\[
F_i(-p)=(-1)^\La \, F_i(p)
\]
for all points $p$ in the unit sphere $\SS^3$, and the Hopf fields
$h_i$ are odd (i.e., $h_i(-p)=-h_i(p)$), we conclude that a Beltrami
field on the sphere with eigenvalue $\la$ has parity $(-1)^{\la+1}$, so it
is either even or odd.
Therefore, the fact that $\Phi(\cS)$  is a set of vortex lines and vortex tubes
of the Beltrami field~$u$ diffeomorphic to $\cS$ and contained in a
ball of small radius $1/\la$ automatically implies
that so is the antipodal set $-\Phi(\cS)$.

\subsubsection*{The result carries over to lens spaces} In order to
see why, the key is that in the sphere the statement of
Theorem~\ref{T.approx} can be refined to include localizations around
different points of the sphere. More precisely, let us fix $l$ points
$P_1,\dots, P_l$ in~$\SS^3$, none of which are antipodal to another
(that is, $P_j\neq -P_k$), and denote by $\Psi_j:\BB(P_j,R_0)\to
B_{R_0}$ a patch of normal geodesic coordinates centered at the point
$P_j$. Here $\BB(P_j,R_0)$ denotes the geodesic ball in the sphere of
center $P_j$ and radius 
\[
R_0:=\frac12\min_{j\neq k}\dist_{\SS^3}(P_j,P_k)\,.
\]
The approximation theorem can then be stated as follows:

\begin{theorem}\label{T.approx2}
Let $\{v_j\}_{j=1}^l$ be Beltrami fields in $\RR^3$, satisfying
$\curl v_j=v_j$. Let us fix
any positive numbers $\ep$ and $m$. Then for any large enough integer $\la$ there is a
Beltrami field~$u$, satisfying $\curl u=\la u$ in $\SS^3$, such that 
\begin{equation}\label{nose}
\bigg\|(\Psi_j)_* u\Big(\frac{\cdot}{ \la}\Big)-v_j\bigg\|_{C^m(B)}<\ep
\end{equation}
for all $1\leq j\leq l$.
\end{theorem}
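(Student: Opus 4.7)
The plan is to apply the construction used in the proof of Theorem~\ref{T.approx} separately at each of the points $P_1,\dots,P_l$, producing local Beltrami fields $u_1,\dots,u_l$ on $\SS^3$ all sharing the common eigenvalue $\la:=\La+2$, and then to set $u:=u_1+\cdots+u_l$. Since the sum of eigenfields of the curl operator with a common eigenvalue is again an eigenfield, $u$ is automatically a Beltrami field satisfying $\curl u=\la u$. The substantive task is therefore to verify that the diagonal contributions $(\Psi_j)_*u_j(\cdot/\la)$ approximate $v_j$ on $B$, exactly as before, while the cross-terms $(\Psi_k)_*u_j(\cdot/\la)$ with $j\neq k$ are negligible in the $C^m(B)$ norm. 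Oddness of $\la$ plays no role in the sphere version of Theorem~\ref{T.approx} (as noted in Section~\ref{S:final}), so no parity hypothesis is needed here either.

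For each $j$, I would run Propositions~\ref{P.Bessel}--\ref{P.Hopf} centered at $P_j$ to obtain, for every sufficiently large $\La$, a Beltrami field
\[
u_j:=\frac{1}{2\La^2}\curl(\curl+\La)\bigl(Y_1^{(j)}h_1+Y_2^{(j)}h_2+Y_3^{(j)}h_3\bigr),
\]
where each spherical harmonic $Y_i^{(j)}(p)=\sum_n c_n^{i,j}\, C_\La(p\cdot p_n^{(j)})$ is built from finitely many nodes $p_n^{(j)}\in\BB(P_j,R_j/\La)$, and that satisfies $\|(\Psi_j)_*u_j(\cdot/\la)-v_j\|_{C^m(B)}<\ep/(2l)$. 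This is literally the content of Sections~\ref{S.sphere}--\ref{S.Prop3} applied at~$P_j$, with no modification.

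The hard part will be controlling $u_j$ in the neighborhoods of the remaining $P_k$. The hypothesis that no two of the $P_i$ are antipodal is exactly what is needed here: it guarantees that the geodesic distance from $P_k$ to either of the points $\pm P_j$ is bounded below by a positive constant $\rho$ depending only on the configuration $\{P_1,\dots,P_l\}$. Hence, for $\La$ large, every $p_n^{(j)}$ and its antipode $-p_n^{(j)}$ stay at distance at least $\rho/2$ from every point of $\BB(P_k,1/\la)$. Applying Darboux's asymptotic formula for the Jacobi polynomials $P_\La^{(1/2,1/2)}(\cos\theta)$ uniformly on the compact set $\theta\in[\rho/2,\pi-\rho/2]$ yields
\[
|\partial_\theta^r C_\La(\cos\theta)|\leq C_r\La^{r-1}\qquad (0\leq r\leq m+2),
\]
whence $|\nabla^r Y_i^{(j)}(p)|\leq C_r'\La^{r-1}$ uniformly for $p\in\BB(P_k,1/\la)$. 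Since $(2\La^2)^{-1}\curl(\curl+\La)$ is a second-order operator divided by $\La^2$, the same type of estimate $|\nabla^r u_j(p)|\leq C_r''\La^{r-1}$ follows on this ball.

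Finally, the chain rule applied to the rescaling $x\mapsto\Psi_k^{-1}(x/\la)$ introduces a factor $\la^{-|\alpha|}$ for every multi-index~$\alpha$ with $|\alpha|\leq m$, which precisely cancels the $\La^{|\alpha|}$ produced above and leaves the uniform bound $\|(\Psi_k)_*u_j(\cdot/\la)\|_{C^m(B)}=O(\La^{-1})$. Taking $\La$ large enough that this is smaller than $\ep/(2l)$ and summing the diagonal approximation with the $l-1$ off-diagonal errors then gives $\|(\Psi_k)_*u(\cdot/\la)-v_k\|_{C^m(B)}<\ep$ for every~$k$. The only genuinely delicate point is extracting the uniform Darboux estimate in the third paragraph; everything else is bookkeeping on top of Theorem~\ref{T.approx}.
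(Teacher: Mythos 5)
Your proposal is correct and follows essentially the same route as the paper: the field is a superposition of the single-point constructions (the paper sums the spherical harmonics $\hat Y_{ij}$ before applying $\frac{1}{2\La^2}\curl(\curl+\La)$, which by linearity is the same as summing the resulting Beltrami fields), and the off-diagonal contributions are killed by the $O(1/\La)$ decay of $C_\La(p\cdot q)$ when $p$ stays away from $\pm q$ --- exactly where the non-antipodal hypothesis enters in the paper as well. The only cosmetic difference is that you bound the derivatives of the cross-terms directly from the Darboux asymptotics, whereas the paper records only the $C^0$ bound and upgrades it to $C^{m+2}$ by the elliptic estimates already used in Proposition~\ref{P.sphharm}; both work.
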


\begin{proof}
Arguing as in Proposition~\ref{P.sphharm} we
infer that for any large enough integer~$\La$ there are spherical
harmonics $\hat Y_{ij}$ of energy $\La(\La+2)$ such that
\[
\bigg\|w^i_j-\hat Y_{ij}\circ \Psi_j^{-1}\Big(\frac\cdot \La\Big)\bigg\|_{C^{m+2}(B)}<\de\,,
\]
where $w_j$ is a vector field of the form
\[
w_j= \sum_{n=1}^N c_{jn}\, j_0(|x-x_{jn}|)
\]
that approximates the Beltrami field $v_j$ in $C^{m+2}(B)$ as in
Proposition~\ref{P.Bessel} and $w_j^i$ ($1\leq i\leq3$) denotes its
$i^{\mathrm{th}}$ Cartesian component. Noticing that the Jacobi
polynomial behaves as
\[
\La^{-\frac12}\, P_\La^{(\frac12,\frac12)}(\cos t)=\frac{O(\La^{-1})}t
\]
uniformly for $\La^{-1}<t<\pi-\La^{-1}$~\cite[Theorem 7.32.2]{Szego75},
it stems that the ultraspherical polynomial $C_\La$ is uniformly
bounded as
\[
|C_\La(p\cdot q)|\leq \frac{C_\rho}\La
\]
for any points $p,q$ in $\SS^3$ such that
\begin{equation*}%\label{distpq}
\dist_{\SS^3}(p,q)\geq \rho \quad \text{and}\quad \dist_{\SS^3}(p,-q)\geq \rho \,,
\end{equation*}
with a constant $C_\rho$ that only depends on the positive constant
$\rho$. 

Using the formulas of
Section~\ref{S.Prop2} it is now easy to show that for any $j$ and any fixed positive radius $\rho$ we have
\[
\|\hat Y_{ij}\|_{C^0(\SS^3\backslash (\BB(P_j,\rho)\cup \BB(-P_j,\rho))}\leq \frac{C_\rho}\La
\]
for large~$\La$, with a constant that depends on~$\rho$ (and, of
course, on $v$ and $\de$). If we now define
\[
Y_i:=\sum_{j=1}^l \hat Y_{ij}\,,
\]
and choose $\rho$ small enough so that the sets $\BB(P_j,\rho)\cup \BB(-P_j,\rho)$ are disjoint for all $j$, the same reasoning that we employed in the proof of
Proposition~\ref{P.sphharm} shows that
\[
\bigg\|w^i_j-Y_i\circ \Psi_j^{-1}\Big(\frac\cdot \La\Big)\bigg\|_{C^{m+2}(B)}<C\de
\]
for all $1\leq i\leq 3$ and $1\leq j\leq l$, which plays a role
completely analogous to that of Proposition~\ref{P.sphharm} in the
generalized context that we are now considering. The rest of the
argument remains exactly as in Section~\ref{S.sphere}, so the result follows.
\end{proof}

In particular, this yields the existence of Beltrami fields in the
sphere having prescribed sets of closed vortex lines and tubes (modulo
diffeomorphism) around any finite number of points $P_1,\dots,
P_l$. These lines and tubes are contained in balls of radius
$1/\la$. This line of reasoning also allow us to prove an analog of
Theorem~\ref{T.main} in any lens space $L(p,q)$:

\begin{theorem}\label{T.Lpq}
Let $\cS$ be a finite union of (pairwise disjoint, but possibly
knotted and linked) closed curves and tubes contained in a contractible subset of a three-dimensional
lens space $L(p,q)$. Then for any large
enough even integer $\la$ there exists
a Beltrami field~$u$ satisfying the equation $\curl u=\la u$ and a diffeomorphism $\Phi$ of
$L(p,q)$ such that $\Phi(\cS)$ is a union of vortex lines
and vortex tubes of~$u$. Furthermore, this set is structurally stable.
\end{theorem}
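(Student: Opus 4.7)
The plan is to reduce the problem to the sphere case, lift $\cS$ to $\SS^3$ via the covering projection $\pi : \SS^3 \to L(p,q)$, invoke the multi-point approximation Theorem~\ref{T.approx2}, and then symmetrize by averaging over the deck transformation group to produce a Beltrami field on $\SS^3$ that descends to the lens space. Since $\cS$ sits in a contractible subset $U$ of $L(p,q)$, the preimage $\pi^{-1}(U)$ is a disjoint union of $p$ homeomorphic copies of $U$, so we can pick a lift $\tilde\cS_0 \subset \SS^3$ of $\cS$ contained in one component. Let $g$ generate the cyclic group $\ZZ_p$ of deck transformations, which acts on $\SS^3$ by orientation-preserving isometries, and fix a base point $P_0$ near $\tilde\cS_0$ with orbit $P_j := g^j P_0$.

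Two cases arise according to whether $-\id \in \ZZ_p$. If $p$ is odd, then $-\id \notin \ZZ_p$ and we may perturb $P_0$ slightly to ensure that the $p$ orbit points are pairwise non-antipodal, so Theorem~\ref{T.approx2} applies directly with $l = p$ localization points. If $p$ is even (in which case $\gcd(p,q) = 1$ forces $q$ odd, so that $-\id = g^{p/2}$ lies in the group), the orbit necessarily contains antipodal pairs; here we invoke the parity computation from Section~\ref{S:final}, which shows that any Beltrami field on $\SS^3$ with even eigenvalue satisfies $u(-P) = -u(P)$ in $\RR^4$, i.e., is invariant under the antipodal map. It then suffices to localize only at the $p/2$ representatives $P_0, \ldots, P_{p/2-1}$, which can be chosen pairwise non-antipodal, and to use Theorem~\ref{T.approx2} with $l = p/2$. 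This parity reduction is the main obstacle and the precise reason for the hypothesis that $\la$ be even.

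Let $v$ be the Beltrami field on $\RR^3$ furnished by Theorem~\ref{Acta} realizing the appropriate rescaled lift of $\cS$, and for each admissible index $j$ let $B_j$ denote the orthogonal transformation of $\RR^3$ induced by the isometry $g^j$ on normal geodesic coordinates between $P_0$ and $P_j$. Define rotated target fields
\[
v_j(x) := B_j^{-1}\, v(B_j x),
\]
each of which is again a Beltrami field with $\curl v_j = v_j$, since the equation is $\SO(3)$-equivariant. By Theorem~\ref{T.approx2}, for any sufficiently large integer $\la$ there is a Beltrami field $u$ on $\SS^3$ satisfying $\curl u = \la u$ and
\[
\Big\|(\Psi_j)_* u\Big(\frac{\cdot}{\la}\Big) - v_j\Big\|_{C^m(B)} < \ep
\]
for every chosen $j$.

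Finally, set $\bar u := p^{-1}\sum_{k=0}^{p-1}(g^k)_* u$. Averaging preserves the eigenvalue $\la$ since each $g^k$ is an isometry, and by construction $\bar u$ is $\ZZ_p$-invariant, hence descends to a Beltrami field on $L(p,q)$. In the even case, the antipodal invariance of $u$ collapses the sum to $(2/p)\sum_{j=0}^{p/2-1}(g^j)_* u$, so only the controlled points contribute. A direct chain-rule computation, using how the charts $\Psi_j$ and $\Psi_0 \circ g^{-j}$ are related by the orthogonal map $B_j$, shows that our choice of rotated targets makes every summand $(\Psi_0)_*\bigl((g^k)_* u\bigr)(\cdot/\la)$ approximate the same field $v$ in $C^m(B)$, and consequently so does the average. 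With this approximation in place, the structural stability argument from Section~\ref{S.main}, based on the elliptic permanence theorem and the KAM theorem for volume-preserving perturbations, produces the diffeomorphism $\Phi$ of $L(p,q)$ such that $\Phi(\cS)$ is a structurally stable union of vortex lines and vortex tubes of the descended Beltrami field.
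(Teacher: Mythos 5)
Your proof is correct and follows essentially the same route as the paper: lift to $\SS^3$, apply the multi-point approximation Theorem~\ref{T.approx2} at the orbit $\{g^jP_0\}$, use the parity of Beltrami fields with even eigenvalue to handle the antipodal identification when $p$ is even, symmetrize over the deck group, and descend. The one tactical difference is in the symmetrization step: you prescribe a rotated copy $v_j=B_j^{-1}v(B_j\,\cdot)$ of $v$ at \emph{every} orbit point and then average, so that each summand $(g^k)_*u$ contributes a full copy of $v$ near $P_0$; the paper instead prescribes $v$ only at $P_0$ and the \emph{zero} field at the remaining points $P_1,\dots,P_{p'-1}$, and takes a plain sum, so that near $P_0$ the terms with $k\neq 0$ are simply $O(\ep)$ errors. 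Both work, but the zero-target device eliminates all of the chart-rotation bookkeeping you need to make the $p$ rotated targets mutually consistent (and, in the even case, consistent with the automatic antipodal equivariance of $u$), at no cost. Two minor points: for $p$ odd no perturbation of $P_0$ is needed, since a free isometric action of odd order can never send a point to its antipode (otherwise the square of that element would have a fixed point); and one should note that your $B_j$ lie in $\SO(3)$ because the $g^j$ preserve orientation, which is what guarantees $\curl v_j=v_j$ and hence that Theorem~\ref{T.approx2} is applicable to the rotated targets.
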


\begin{proof}
The lens space can be written as 
\[
L(p,q)=\SS^3/G\,,
\]
where $G$ is a finite isometry group isomorphic to $\ZZ_p$. We can
assume that $G$ is generated by certain isometry~$g$. Let us now fix
a point $p_0\in\SS^3$ and set
\[
P_j:=g^j\cdot p_0
\]
for $0\leq j\leq p-1$. If $\Psi$ is a patch of normal geodesic coordinates around
$p_0$, we will also set $\Psi_j(x):=\Psi(g^{-j}\cdot x)$. Notice that if $p$ is odd there are not any points in the set
$\{P_j\}_{j=0}^{p-1}$ that are antipodal to each other, while for $p$ even
$P_j$ and $P_k$ are antipodal if and only if $|j-k|=\frac p2$. 

Let us fix a Beltrami field~$v$ in $\RR^3$ as in
Theorem~\ref{Acta}. Theorem~\ref{T.approx2} then ensures the existence
of a Beltrami field $\tu$ in $\SS^3$ such that
\[
\bigg\|(\Psi_j)_* \tu\Big(\frac{\cdot}{ \la}\Big)-v_j\bigg\|_{C^m(B)}<\ep\,,
\]
where $0\leq j\leq p'-1$ with $p':= p$ if $p$ is odd and $p':= \frac p2$ if
$p$ is even. Here $v_0:=v$ and $v_j:=0$ for $1\leq j\leq p'-1$. Notice that, as $\la$ is even, we saw in the previous
remark  that $\tu$ is odd, i.e.,
$\tu(x)=-\tu(-x)$, so that $\tu$ is equivariant under the isometry $x\mapsto -x$. Hence, by construction, the vector field
\[
u:=\sum_{j=0}^{p'-1} (g^j)_* \tu
\]
is $G$-equivariant, and therefore it defines a vector field in the quotient space
$L(p,q)=\SS^3/G$ that we still denote by~$u$ with some abuse of notation. Arguing exactly as in the proof
of the main theorem one can show that the vector field~$u$ on $L(p,q)$
indeed has the desired properties, so the statement then follows.
\end{proof}

\subsubsection*{In the torus, the distribution of
    rational points on the 2-sphere is key} The proof that we have given
  holds provided that the eigenvalue~$\la$ is an odd integer of
  sufficiently large absolute value. It does not say anything about
  even integers, or about eigenvalues that are not integers. This
  assertion can be refined a little, however. We have seen that for any eigenvalue $\la$ of the curl operator
  in $\TT^3$ there is a set of points $\{\xi_n\}_{n=1}^N$ lying on the
  unit sphere $\SS^2$ of $\RR^3$ such that $\la\xi_n\in\ZZ^3$ (this is
  obvious from the fact that one can write $\la=|k|$ with
  $k\in\ZZ^3$). Therefore, in the proof of Theorem~\ref{T.approx} for
  the torus (cf.\ Section~\ref{S.torus}) one can substitute the
  collection of odd integers $\La$ by any subset of eigenvalues $\la$
  for which there is a set of points $\{\xi_n\}_{n=1}^N\subset\SS^2$
  (depending on $\la$ and such that the rescalings $\la \xi_n$ are in
  $\ZZ^3$) that becomes dense in the sphere as $|\la|\to\infty$ along
  this subset of eigenvalues. In particular, replacing the density
  condition by the more stringent assumption that $\{\xi_n\}$ becomes
  equidistributed on the sphere, it turns out that the characterization of the numbers
  $\la$ that satisfy this property is somehow related to the celebrated
  Linnik problem in number theory. In particular, since the
  aforementioned equidistribution property holds for any eigenvalue 
  for which the integer $\la^2$ is square-free~\cite{Du88}, we
  immediately infer that the statement of Theorem~\ref{T.main} also
  holds for any large enough eigenvalue $\la$ of curl (possibly even or non-integer) for
  which $\la^2$ is square-free.

\section*{Acknowledgments}

The authors are supported by the ERC Starting Grants~633152 (A.E.) and~335079
(D.P.-S.\ and F.T.L.) and by a fellowship from Residencia de Estudiantes (F.T.L.). This work is supported in part by the
ICMAT--Severo Ochoa grant
SEV-2011-0087.

\bibliographystyle{amsplain}

\end{document}